\pgfplotsset{compat=1.10}
\pgfplotsset{soldot/.style={color=black,only marks,mark=*}} \pgfplotsset{holdot/.style={color=black,fill=white,only marks,mark=*}}
\newtheorem{thm}{Theorem}[section]
\newtheorem{lem}[thm]{Lemma}
\newtheorem{prop}[thm]{Proposition}
\newtheorem{cor}[thm]{Corollary}
\newtheorem{defn}[thm]{Definition}
\newtheorem{defns}[thm]{Definitions}
\newtheorem{remark}[thm]{Remark}
\newtheorem{example}[thm]{Example}
\newtheorem{examples}[thm]{Examples}
\newcounter{substep}
\def\thesubstep{\arabic{substep}}
\newenvironment{point}[1]{%
\refstepcounter{thm}\noindent{\bf (\thethm)\quad {#1}}\ }%
{}
\newcommand{\Z}{{\mathbb Z}} \newcommand{\R}{{\mathbb R}}
 \newcommand{\C}{{\mathbb C}}
\newcommand{\sph}{{\mathbb S}} 
 \newcommand{\Proy}{{\mathbb P}}
 \newcommand{\J}{{\mathcal J}}
\newcommand{\gtp}{{\mathfrak p}} \newcommand{\gtq}{{\mathfrak q}}
\newcommand{\gtm}{{\mathfrak m}} \newcommand{\gtn}{{\mathfrak n}}
\newcommand{\gta}{{\mathfrak a}} \newcommand{\gtb}{{\mathfrak b}}
\newcommand{\Dd}{{\EuScript D}}
\newcommand{\Zz}{{\EuScript Z}}
\newcommand{\Uu}{{\EuScript U}}
\newcommand{\Bb}{{\EuScript B}}
\newcommand{\im}{\operatorname{im}}
\newcommand{\Int}{\operatorname{Int}}
\newcommand{\dist}{\operatorname{dist}}
\newcommand{\Max}{\operatorname{Max}}
\newcommand{\Spec}{\operatorname{Spec}}
\newcommand{\cl}{\operatorname{Cl}}
\newcommand{\Specs}{\operatorname{Spec_s}}
\newcommand{\betas}{\operatorname{\beta_s\!}}
\newcommand{\Min}{\operatorname{Min}}
\newcommand{\cd}{\operatorname{Card}}
\newcommand{\x}{{\tt x}}  
 \renewcommand{\t}{{\tt t}}
\newcommand{\veps}{\varepsilon}
\newcommand{\ol }{\overline}
\begin{document}

\title[Locally injective semialgebraic mappings]{Locally injective  semialgebraic mappings}

\author{E. Baro}
\author{Jos\'e F. Fernando}
\author{J.M. Gamboa}

\subjclass[2010]{Primary 14P10, 54C30; Secondary 12D15}
\keywords{Semialgebraic set, semialgebraic function, locally injective semialgebraic map, finite homomorphism, integral homomorphism, simple homomorphism.}
\thanks{Authors supported by Spanish STRANO PID2021-122752NB-I00 and Grupos UCM 910444}

\begin{abstract}
We characterize locally injective semialgebraic maps between two semialgebraic sets in terms of the induced homomorphism between their rings of (continuous) semialgebraic functions. 
\end{abstract}

\maketitle


\maketitle
\setcounter{tocdepth}{2}
{\small
\begin{spacing}{0.01}
\tableofcontents
\end{spacing}
}

\section{Introduction}\label{s1}

Locally injective maps have deserved the attention of mathematicians for a long time. Indeed, the classical inverse mapping theorem is a result about locally injective maps, but some work has been done to prove the  local injectivity of continuous maps. In that vein it is worthwhile mentioning Massey article \cite{m}. As a consequence of his main results 2.3 and 2.4  he proved in Corollary 6 that if $f:\R^n\to\R^n$ is a continuous map  and $U\subset\R^n$  is an open subset whose closure $\cl_{\R^n}(U)$ is compact and whose boundary $\partial U:=\cl_{\R^n}(U)\setminus U$ is a connected and $(n-1)$-dimensional orientable manifold such that $f|_U$ is a local homeomorphism and $f|_{\partial U}$ is injective, then $f|_{\cl_{\R^n}(U)}:\cl_{\R^n}(U)\to f(\cl_{\R^n}(U))$ is a homeomorphism.

On the other hand, in the article \cite{l} by Lewenberg also a global property is deduced from local injectivity; he proves that locally injective maps in o-minimal structures without poles are surjective.

In this paper we characterize  locally injective semialgebraic maps $\pi:M\to N$ between semialgebraic sets $M$ and $N$ in terms of some finiteness properties of the induced homomorphism $\varphi_{\pi}:{\mathcal S}(N)\to{\mathcal S}(M),\, f\mapsto f\circ\pi$ between their rings of continuous semialgebraic functions ${\mathcal S}(N)$ and ${\mathcal S}(M)$. 

To lighten notations we denote $\Specs(M):=\Spec\,({\mathcal S}(M))$ the set of prime ideals of ${\mathcal S}(M)$ endowed with the Zariski topology. Our previous articles \cite{bfg}, \cite{f}, \cite{f1}, \cite{f2}, \cite{fg3}, \cite{fg4}, \cite{fg6}, \cite{fg5}, \cite{fg1} and \cite{fg2} are devoted to study the relationship between $\pi$ and its spectral counterpart 
$$
\Specs(\pi):\Specs(M)\to\Specs(N),\, \gtp\mapsto\varphi_{\pi}^{-1}(\gtp),
$$ 
and this article is a new step in our attempt to understand this relationship. 

A subset $M\subset\R^m$ is said to be \textit{basic semialgebraic} if it can be written as
$$
M:=\{x\in\R^m:\ f(x)=0,\, g_1(x)>0,\ldots,g_{\ell}(x)>0\}
$$
for some polynomials $f,g_1,\ldots,g_{\ell}\in\R[\x_1,\ldots,\x_m]$. The finite unions of basic semialgebraic sets are called \emph{semialgebraic sets}. A continuous function $f:M\to\R$ is said to be \emph{semialgebraic} if its graph is a semialgebraic subset of $\R^{n+1}$. Usually, semialgebraic function just means a function, non necessarily continuous, whose graph is semialgebraic. However, since all semialgebraic functions occurring in this article are continuous we will omit for simplicity the continuity condition when we refer to them. Likewise, a continuous mapping $\pi:M\to N$ between semialgebraic sets whose graph is semialgebraic will be called, simply, a \em semialgebraic mapping.\em 

The sum and product of functions, defined pointwise, endow the set ${\mathcal S}(M)$ of semialgebraic functions on $M$ with a natural structure of commutative ring whose unity is the semialgebraic function ${\bf 1}_M$ with constant value 1. In fact ${\mathcal S}(M)$ is an $\R$-algebra, if we identify each real number $r$ with the constant function which just attains this value. The most simple examples of semialgebraic functions on $M$ are the restrictions to $M$ of polynomials in $n$ variables. Other relevant ones are the absolute value of a semialgebraic function, the distance function to a given semialgebraic set, the maximum and the minimum of a finite family of semialgebraic functions, the inverse and the $k$-root of a semialgebraic function whenever these operations are well-defined.

Each semialgebraic map $\pi:M\to N$ between semialgebraic sets $M$ and $N$ induces a homomorphism of $\R$-algebras $\varphi_{\pi}:{\mathcal S}(N)\to{\mathcal S}(M),\, g\mapsto g\circ\pi$, and in this work we characterize the local injectivity of $\pi$ in terms of finiteness conditions imposed to this homomorphism. 

Note that each homomorphism $\varphi:{\mathcal S}(N)\to{\mathcal S}(M)$ of rings with unity is a homomorphism of $\R$-algebras. To prove this observe that $\varphi({\bf 1}_N)={\bf 1}_M$, so given $r\in\R$ and a point $x\in M$ we have $\varphi({\bf 1}_N)(x)={\bf 1}_M(x)=1$. Hence $\varphi_x:\R\to\R, \, r\mapsto \varphi(r)(x)$ is a field homomorphism with $\varphi_x(1)=1$. Therefore $\varphi_x$ is the identity, that is, $\varphi(r)(x)=r$ for every $x\in M$ and $r\in\R$, that is, $\varphi(r)=r$. 

In Section 2 we introduce the terminology employed along the paper, whereas the relationship between finiteness conditions of $\pi$ and $\varphi_{\pi}$ are collected in Section 3. A semialgebraic map $\pi:M\to N$ is said to be \em locally injective \em if there exists a family ${\mathcal U}:=\{U_i:\, i\in I\}$ of open semialgebraic subsets of $M$ such that each restriction $\pi|_{U_i}:U_i\to N$ is injective. We will see in Remark \ref{FNTSS} that in case $\varphi_{\pi}$ is a finite homomorphism, then $\pi$ is locally injective and the family ${\mathcal U}$ above can be assumed to be finite. However, local injectivity does not implies injectivity. For example, $\pi:\R\to\R^2,\, t\mapsto(t^2-1,t(t^2-1))$ is a non injective but locally injective semialgebraic map. 

The main result of the paper is Theorem \ref{lciny}, where we prove that if $\varphi_{\pi}:{\mathcal S}(N)\to{\mathcal S}(M)$ is a finite homomorphism, then the maps
$$
\Specs(\pi):\Specs(M)\to\Specs(N)\text{ and }\pi:M\to N
$$
are proper, separated, locally injective and their fibers are finite sets. As a consequence we deduce in Corollary \ref{chliy} that if $M$ and $N$ are compact semialgebraic sets, a semialgebraic map $\pi:M\to N$ is locally injective if and only if $\varphi_{\pi}:{\mathcal S}(N)\to{\mathcal S}(M)$ is finite. 

\section{Preliminaries}\label{s2}

Let us fix some notations and recall some basic results concerning semialgebraic sets and functions.  For each $f\in{\mathcal S}(M)$ and each semialgebraic subset $N\subset M$, we denote
$$
\Zz_N(f):=\{x\in N:\, f(x)=0\}\quad\&\quad\Dd_N(f):=N\setminus\Zz_N(f).
$$ 
In addition, if $N\subset M$ we denote $\Int_M(N)$ and $\cl_M(N)$, respectively, the interior and the closure of $N$ in $M$.

\begin{lem} \label{tietze} Let $N$ be a closed semialgebraic subset of a semialgebraic set $M$.

\noindent \em (1) \em There exists a function $f\in{\mathcal S}(M)$ such that $N=\Zz_M(f)$. 

\noindent \em (2) \em The restriction homomorphism ${\mathcal S}(M)\to{\mathcal S}(N),\, f\mapsto f|_N$ is surjective.
\end{lem}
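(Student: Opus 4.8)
\emph{Proof proposal.} For (1) the plan is to use the distance function, exactly one of the standard examples quoted above. Put $\widetilde N:=\cl_{\R^m}(N)$, a closed semialgebraic subset of $\R^m$ (the closure of a semialgebraic set is semialgebraic); since $N$ is closed in $M$ we have $\widetilde N\cap M=N$. The function $\delta:\R^m\to\R,\ x\mapsto\dist(x,\widetilde N)$ is semialgebraic (its graph is first-order definable with parameters describing $\widetilde N$, hence semialgebraic by Tarski--Seidenberg), it is $1$-Lipschitz, hence continuous, and $\delta(x)=0$ if and only if $x\in\widetilde N$ because $\widetilde N$ is closed. Thus $f:=\delta|_M\in{\mathcal S}(M)$ satisfies $\Zz_M(f)=\widetilde N\cap M=N$. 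This part is completely self-contained.

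Part (2) is the semialgebraic Tietze extension theorem, and the plan is to reduce it, via semialgebraic triangulation, to a purely combinatorial extension problem. By the semialgebraic triangulation theorem there exist a finite simplicial complex $K$ and a semialgebraic homeomorphism $\Phi:|K|\to M$ such that $N=\Phi(|L|)$ for a subcomplex $L$ of $K$ (here $\Phi^{-1}(N)$ is a union of open simplices because $N$ is among the prescribed sets, and it is closed in $|K|$ because $N$ is closed in $M$). Replacing $g\in{\mathcal S}(N)$ by $g\circ(\Phi|_{|L|})$ it suffices to extend a continuous semialgebraic function $h:|L|\to\R$ to a continuous semialgebraic function on $|K|$, and then to transport it back along $\Phi^{-1}$. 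I would carry this out by induction on the dimension of the skeleta: assume $h$ has already been extended to a continuous semialgebraic function on $|L|\cup|K^{(j)}|$, where $K^{(j)}$ is the $j$-skeleton (the base step being immediate, since the finitely many vertices of $K$ not in $L$ are isolated points of $|L|\cup|K^{(0)}|$). For each $(j+1)$-simplex $\sigma$ not contained in $L$, its boundary $\partial\sigma$ lies in $|K^{(j)}|$, so $h$ is already defined and continuous there; extend it over $\sigma$ by assigning an arbitrary value at the barycenter $b_\sigma$ and interpolating linearly along the segments joining $b_\sigma$ to $\partial\sigma$. On $\sigma\setminus\{b_\sigma\}$ the radial projection onto $\partial\sigma$ and the corresponding interpolation parameter are semialgebraic and continuous, so the extension is semialgebraic; it is continuous at $b_\sigma$ since the parameter tends to $0$ there while $h$ is bounded on the compact set $\partial\sigma$; and it restricts to the old function on $\partial\sigma$. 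As distinct simplices of equal dimension meet only along faces already treated, these pieces glue to a continuous semialgebraic function on $|L|\cup|K^{(j+1)}|$, and after finitely many steps one reaches $|K|$.

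The only substantial ingredient is the semialgebraic triangulation theorem; granting it, everything else is routine bookkeeping. I expect this to be the genuine obstacle, in the following sense: the classical proof of Tietze's theorem, which builds the extension as a uniform limit of functions assembled from Urysohn functions, does \emph{not} transfer to the semialgebraic category, because such limits need not be semialgebraic, so some form of geometric finiteness — here triangulation — really has to be invoked. (Alternatively, one may simply quote the semialgebraic Tietze extension theorem from the literature.)
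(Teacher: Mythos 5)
Your part (1) is the paper's own argument: the authors simply take $f:=\dist(\cdot,N)$, whose zero set in $M$ is $\cl_{\R^m}(N)\cap M=N$ because $N$ is closed in $M$; your version with $\delta|_M$ is the same function. For part (2) the paper does exactly what your final parenthetical suggests: it quotes the semialgebraic Tietze extension theorem of Delfs--Knebusch \cite[Thm.~3]{dk} and proves nothing. So as a match with the paper you are fine.

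Your attempted self-contained proof of (2), however, has a real gap for non-compact $M$. The semialgebraic triangulation theorem produces a homeomorphism $\Phi:|K|\to M$ with the \emph{full} polyhedron of a finite complex only when $M$ is closed and bounded; in general $M$ is only semialgebraically homeomorphic to a union of \emph{open} simplices of a finite complex $K$ (compatible with $N$), and that union need not contain the closed simplices. Then $\Phi^{-1}(N)$ is closed in the domain of $\Phi$ but need not be the full polyhedron $|L|$ of a subcomplex, and, more seriously, for a simplex $\sigma$ meeting the domain the boundary $\partial\sigma$ need not lie in the domain at all, so the skeleton-by-skeleton coning induction has nowhere to start from and nothing to interpolate against near the missing faces. (A concrete instance: $M=(0,1)$, $N=\{1/2\}$ --- trivial here, but it shows the domain is an open simplex whose vertices are absent.) For compact $M$ your argument is essentially correct: there $\Phi^{-1}(N)$ is a closed union of open simplices, hence $|L|$ for a subcomplex $L$, and the barycentric interpolation you describe is continuous, semialgebraic, and glues along shared faces. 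To cover the general case you would either have to triangulate a semialgebraic compactification of $M$ compatibly with $M$ and $N$ and control the behaviour near the remainder, or do as the paper does and invoke \cite[Thm.~3]{dk}. Your remark that the classical Urysohn-series proof fails semialgebraically (uniform limits of semialgebraic functions need not be semialgebraic) is correct and is indeed why some finiteness input is unavoidable.
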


\begin{proof} (1) It suffices to choose $f:=\dist(\cdot,N)$.

\noindent (2) This was proved by Delfs and Knebusch in \cite[Thm. 3]{dk}. 
\end{proof}

For our purposes it is useful to know that ${\mathcal S}(M)$ is a \em Gelfand ring, \em that is, each prime ideal in ${\mathcal S}(M)$ is contained in a unique maximal ideal. This was proved  by Carral-Coste \cite{cc} and Schwartz \cite{s}. We prove in Proposition \ref{Gelfand} a slightly stronger result. First we need an auxiliary lemma.

\begin{lem}\label{quot1} Let $M$ be a semialgebraic set and let $f,g\in{\mathcal S}(M)$ with $\Zz_M(g)\subset\Int_M(\Zz_M(f))$. Then, there exists $h\in{\mathcal S}(M)$ with $f=gh$ and $\Zz_M(f)\subset\Zz_M(h)$. 
\end{lem}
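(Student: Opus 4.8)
The idea is to produce $h$ as a quotient $f/g$ that has been suitably "damped" near $\Zz_M(g)$, using the hypothesis $\Zz_M(g)\subset\Int_M(\Zz_M(f))$ to guarantee that the naive quotient stays bounded and extends continuously (in fact semialgebraically) across $\Zz_M(g)$. Concretely, on the open semialgebraic set $\Dd_M(g)=M\setminus\Zz_M(g)$ the function $f/g$ is semialgebraic; the only issue is to control its behaviour as one approaches $\Zz_M(g)$, and there the condition $\Zz_M(g)\subset\Int_M(\Zz_M(f))$ says that $f\equiv 0$ on a neighbourhood of $\Zz_M(g)$, so $f/g$ is identically $0$ near $\Zz_M(g)$ (away from $\Zz_M(g)$ itself). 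Thus $h$ defined by $h:=f/g$ on $\Dd_M(g)$ and $h:=0$ on $\Zz_M(g)$ is well-defined and, on the overlap where it matters, the two prescriptions agree on a neighbourhood of $\Zz_M(g)$, so continuity and semialgebraicity of $h$ are immediate. Then $f=gh$ holds on $\Dd_M(g)$ by construction, and on $\Zz_M(g)$ we have $f=0=gh$ since $g=0$ there (and actually $f=0$ on the whole neighbourhood of $\Zz_M(g)$ sitting inside $\Int_M(\Zz_M(f))$), so $f=gh$ everywhere. Finally $\Zz_M(f)\subset\Zz_M(h)$: if $f(x)=0$ and $x\in\Dd_M(g)$ then $h(x)=f(x)/g(x)=0$, while if $x\in\Zz_M(g)$ then $h(x)=0$ by definition; so this inclusion is automatic from the construction.

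The key steps, in order, are: (i) observe that $U:=\Int_M(\Zz_M(f))$ is an open semialgebraic neighbourhood of $\Zz_M(g)$ on which $f\equiv 0$; (ii) define $h$ piecewise as above on the open cover $\{\,\Dd_M(g),\,U\,\}$ of $M$ — this is a genuine cover precisely because $\Zz_M(g)\subset U$ — and check the two definitions agree on $\Dd_M(g)\cap U$ (there $f=0$, hence $f/g=0$, matching the value $0$ prescribed on $U$); (iii) conclude $h\in{\mathcal S}(M)$ since a function defined and semialgebraic on each member of a finite open semialgebraic cover, agreeing on overlaps, is semialgebraic (the graph is a finite union of semialgebraic sets); (iv) verify $f=gh$ on $M$ by checking it on $\Dd_M(g)$ and on $\Zz_M(g)$ separately; (v) read off $\Zz_M(f)\subset\Zz_M(h)$ from the formula for $h$.

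The main obstacle — though a mild one — is step (iii): making sure that the piecewise-defined $h$ really is a \emph{continuous} semialgebraic function on all of $M$, i.e. that there is no hidden discontinuity along $\Bd_M(\Zz_M(g))\cap\Dd_M(g)$ or along the boundary of $U$. This is handled by noting that $\{\Dd_M(g),U\}$ is an \emph{open} cover, so continuity and semialgebraicity are local conditions that can be checked on each open piece; on $\Dd_M(g)$ the function $f/g$ is a ratio of semialgebraic functions with non-vanishing denominator, hence continuous and semialgebraic, and on $U$ it is the constant $0$. Once one is comfortable that "locally semialgebraic $\Rightarrow$ semialgebraic" for finite open covers (a standard fact about semialgebraic sets, e.g. via the finiteness of the cover and gluing of graphs), the rest is a routine pointwise check. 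A cleaner alternative avoiding any gluing subtlety is to fix $\alpha\in{\mathcal S}(M)$ with $\Zz_M(\alpha)=M\setminus U$ (Lemma \ref{tietze}(1) applied to the closed set $M\setminus U$), note $\alpha$ vanishes on $M\setminus U\supset$ nothing relevant but is non-zero on a neighbourhood of... — actually the slicker route is simply to set $h:=f\cdot g/(g^2+\beta^2)$ where $\beta\in{\mathcal S}(M)$ has $\Zz_M(\beta)=\cl_M(M\setminus U)$, so that $g^2+\beta^2>0$ on all of $M$ (the zero sets of $g$ and of $\beta$ are disjoint, the former lying in $U$ and the latter in its complement's closure), making $h$ manifestly a single semialgebraic function; one then checks $h=f/g$ on $\Dd_M(g)$ because $f\beta^2=0$ there whenever we are inside $U$, and $f=0$ outside $U$ anyway, while $\Zz_M(f)\subset\Zz_M(h)$ is clear from the factor $f$ in the numerator. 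I would present whichever of these two formulations is shortest, defaulting to the explicit-formula version to sidestep the gluing discussion entirely.
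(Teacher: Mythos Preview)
Your main approach is correct and essentially identical to the paper's: the paper defines $h$ piecewise on the same open semialgebraic cover $\{\Dd_M(g),\Int_M(\Zz_M(f))\}$, setting $h=f/g$ on the first piece and $h=0$ on the second, and notes that the two agree on the overlap since $f\equiv 0$ there. Your additional discussion of the gluing step and the alternative explicit formula $h=fg/(g^2+\beta^2)$ are fine but not needed---the paper simply states the piecewise definition and moves on.
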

\begin{proof} The open semialgebraic subsets $U:=M\setminus\Zz_M(g)$ and $V:=\Int_M(\Zz_M(f))$ of $M$ cover $M$ and $f|_{U\cap V}\equiv0$. Thus, 
$$
h:M\to\R,\ x\mapsto\left\{
\begin{array}{cc}
\frac{f(x)}{g(x)}&\text{if $x\in U$,}\\[4pt]
0&\text{if $x\in V$,}\\
\end{array}
\right.
$$
is a semialgebraic function satisfying $f=gh$ and $\Zz_M(f)\subset\Zz_M(h)$.
\end{proof}

\begin{prop} \label{Gelfand}\em (1) \em Let $M$ be a semialgebraic set and let  $\gta_1,\gta_2$ be two comaximal ideals in ${\mathcal S}(M)$. Then, its intersection $\gta_1\cap\gta_2$ does not contain any prime ideal in ${\mathcal S}(M)$.

\noindent \em (2) \em ${\mathcal S}(M)$ is a Gelfand ring. 
\end{prop}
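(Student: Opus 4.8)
The plan is to reduce (2) to (1) almost immediately, and to prove (1) by producing inside $\gta_1\cap\gta_2$ a product $u({\bf 1}_M-u)$ of two elements, chosen so that $u\in\gta_1$ and ${\bf 1}_M-u\in\gta_2$; then a prime ideal $\gtp\subset\gta_1\cap\gta_2$ would have to contain one of the two factors, and in either case this drags ${\bf 1}_M$ into one of the $\gta_i$. The construction of $u$ is a truncation ("clutching'') of $f_1$, and the crucial divisibility $u\in(f_1)$, ${\bf 1}_M-u\in(f_2)$ is obtained from Lemma \ref{quot1}.

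In detail, for (1) I argue by contradiction: suppose $\gtp\subset\gta_1\cap\gta_2$ is prime (we may assume $\gta_1,\gta_2$ proper, the only case of interest). By comaximality pick $f_i\in\gta_i$ with $f_1+f_2={\bf 1}_M$, and set
$$
u:=\max\{0,\min\{{\bf 1}_M,3f_1-{\bf 1}_M\}\}\in{\mathcal S}(M),
$$
so that $\Zz_M(u)=\{f_1\le\tfrac13\}$ and $\Zz_M({\bf 1}_M-u)=\{f_1\ge\tfrac23\}$. Since $f_2={\bf 1}_M-f_1$, the zero set $\Zz_M(f_1)=\{f_1=0\}$ lies in the open set $\{f_1<\tfrac13\}$, hence in $\Int_M(\Zz_M(u))$, and $\Zz_M(f_2)=\{f_1=1\}$ lies in $\{f_1>\tfrac23\}\subset\Int_M(\Zz_M({\bf 1}_M-u))$. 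Lemma \ref{quot1} then provides $v,w\in{\mathcal S}(M)$ with $u=f_1v$ and ${\bf 1}_M-u=f_2w$, so $u\in(f_1)\subset\gta_1$ and ${\bf 1}_M-u\in(f_2)\subset\gta_2$.

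Now $u({\bf 1}_M-u)=f_1f_2vw\in\gta_1\gta_2\subset\gta_1\cap\gta_2\subset\gtp$, so primeness of $\gtp$ gives $u\in\gtp$ or ${\bf 1}_M-u\in\gtp$. If $u\in\gtp\subset\gta_2$, then ${\bf 1}_M=u+({\bf 1}_M-u)\in\gta_2$; if ${\bf 1}_M-u\in\gtp\subset\gta_1$, then ${\bf 1}_M=u+({\bf 1}_M-u)\in\gta_1$. Either way one of the $\gta_i$ equals ${\mathcal S}(M)$, contradicting properness. For (2): a prime ideal $\gtp$ of ${\mathcal S}(M)$ lies in some maximal ideal; if it were contained in two distinct maximal ideals $\gtm_1\ne\gtm_2$, these would be proper and comaximal, so by (1) the ideal $\gtm_1\cap\gtm_2$ would contain no prime, contradicting $\gtp\subset\gtm_1\cap\gtm_2$. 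Hence $\gtp$ lies in a unique maximal ideal, i.e.\ ${\mathcal S}(M)$ is Gelfand.

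The main obstacle is pinpointing the right element $u$: one needs the truncation levels to simultaneously place $\Zz_M(f_1)$ in the \emph{interior} of $\Zz_M(u)$ and $\Zz_M(f_2)$ in the interior of $\Zz_M({\bf 1}_M-u)$, which is precisely the hypothesis of Lemma \ref{quot1} and is what makes $u$ a multiple of $f_1$ and ${\bf 1}_M-u$ a multiple of $f_2$; once this is set up, the rest is formal manipulation with the prime $\gtp$. A routine secondary check is that pointwise maxima and minima of semialgebraic functions are again (continuous) semialgebraic, so $u\in{\mathcal S}(M)$.
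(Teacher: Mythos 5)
Your reduction of (2) to (1) is fine, and the construction of $u$ with $u\in f_1{\mathcal S}(M)\subset\gta_1$ and ${\bf 1}_M-u\in f_2{\mathcal S}(M)\subset\gta_2$ via Lemma \ref{quot1} is correct as far as it goes. But the pivotal step fails: you assert $u({\bf 1}_M-u)\in\gtp$ through the chain $\gta_1\gta_2\subset\gta_1\cap\gta_2\subset\gtp$, and the last inclusion is backwards. The hypothesis is $\gtp\subset\gta_1\cap\gta_2$, not the reverse, and there is no reason whatsoever for $\gta_1\cap\gta_2\subset\gtp$. Nor is $u({\bf 1}_M-u)$ the zero function: on the set $\{1/3<f_1<2/3\}$ one has $u=3f_1-{\bf 1}_M\in(0,1)$ pointwise, so both factors are nonzero there and the product need not lie in $\gtp$. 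Since membership of that product in $\gtp$ is exactly what licenses the appeal to primeness, the contradiction never gets off the ground.

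The paper's proof shows how to close this gap. Instead of one truncation $u$ and its complement ${\bf 1}_M-u$, it builds two functions $g_1,g_2$ (compositions of $f_1$ with plateau functions $G_1,G_2$ whose supports are disjoint) so that $g_1g_2$ is \emph{identically zero}, hence lies in every prime ideal and in particular in $\gtp$, while at the same time $1-g_1\in f_2{\mathcal S}(M)\subset\gta_2$ and $1-g_2\in f_1{\mathcal S}(M)\subset\gta_1$ by the same Lemma \ref{quot1} device you use. Then $g_1\in\gtp\subset\gta_2$ forces ${\bf 1}_M=g_1+(1-g_1)\in\gta_2$, and symmetrically for $g_2$, giving the contradiction. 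Your argument can be repaired along these lines by replacing the pair $u,\ {\bf 1}_M-u$ with two truncations of $f_1$ having disjoint supports: say one vanishing on $\{f_1\le 2/5\}$ and equal to ${\bf 1}_M$ on $\{f_1\ge 1/2\}\supset\Zz_M(f_2)$, the other vanishing on $\{f_1\ge 2/5\}$ and equal to ${\bf 1}_M$ on $\{f_1\le 1/3\}\supset\Zz_M(f_1)$. Part (2) of your write-up is correct and agrees with the paper.
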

\begin{proof} The second part follows from the first one because two distinct maximal ideals are comaximal. For the first part suppose, by the way of contradiction, the existence of a prime ideal $\gtp$ in ${\mathcal S}(M)$ contained in $\gta_1$ and $\gta_2$. Consider the semialgebraic functions
$$
G_1:\R\to\R, \, t\mapsto\left\{
\begin{array}{cl}
0&\text{if $|t|\geq2$,}\\[4pt]
t+2&\text{if $-2<t<-1$,}\\[4pt]
1&\text{if $-1\leq t\leq1$,}\\[4pt]
2-t&\text{if $1<t<2$,}\\
\end{array}
\right.\quad\&\quad H:\R\to\R,\, t\mapsto\left\{
\begin{array}{cl}
0&\text{if $t\leq1/3$,}\\[4pt]
6t-2&\text{if $2<6t<3$,}\\[4pt]
1&\text{if $t\geq1/2$,}\\
\end{array}
\right.
$$
and define $G_2\in{\mathcal S}(\R)$ by $G_2(t)=G_1(t+4)$. Notice that the product  $G_1G_2\equiv0$ and define $h:=5H-4\in{\mathcal S}(\R)$.

The ideals $\gta_1$ and $\gta_2$ being comaximal, there exist $f_1\in\gta_1$ and $f_2\in\gta_2$ such that $1=f_1+f_2$. Let $g_1,g_2\in{\mathcal S}(M)$ defined by
$$ 
g_1:=G_1\circ h\circ f_1\quad\&\quad g_2:=G_2\circ h\circ f_1,
$$
whose product $g_1g_2=0\in\gtp$ because $G_1G_2=0$. Since $\gtp$ is a prime ideal, either $g_1\in\gtp$ or $g_2\in\gtp$, and $\gtp\subset\gta_1\cap\gta_2$. Henceforth, either $g_1\in\gta_2$ or $g_2\in\gta_1$. We will get a contradiction by showing that both statements are false. To that end it suffices to check that $1-g_1\in\gta_2$ and $1-g_2\in\gta_1$. Thus  it is enough to see that
\begin{equation}\label{igualdad10}
1-g_1\in f_2{\mathcal S}(M)\quad\&\quad 1-g_2\in f_1{\mathcal S}(M).
\end{equation}
Let us check first that
\begin{equation}\label{igualdad20}
\Zz_M(f_2)\subset\Int_M(\Zz_M(1-g_1))\quad\&\quad\Zz_M(f_1)\subset\Int_M(\Zz_M(1-g_2)).
\end{equation}

\noindent To prove the first inclusion note that the open subset $U:=f_1^{-1}((1/2,+\infty))$ of $M$ contains $\Zz_M(f_2)$. Hence, it suffices to show that $U\subset\Zz_M(1-g_1)$. Indeed, for every $x\in U$ we have $H (f_1(x))=1$, thus $h (f_1(x))=1$, which implies $g_1(x)=G_1(1)=1$. 

Analogously, $V:=f_1^{-1}((-\infty, 1/3))$ is an open neighbourhood in $M$ of $\Zz_M(f_1)$, and it is enough to check that $V\subset\Zz_M(1-g_2)$. But $H(f_1(x))=0$ for every point $x\in V$, so $h(f_1(x))=-4$. Consequently $g_2(x)=G_2(-4)=G_1(0)=1$, which proves \eqref{igualdad20}. Therefore, by Lemma \ref{quot1}, there exist $u,v\in{\mathcal S}(M)$ such that 
\begin{multline*}
\Zz_M(1-g_1)\subset\Zz_M(u),\quad\Zz_M(1-g_2)\subset\Zz_M(v),\\ 
\quad1-g_1=uf_{2}\in f_{2}{\mathcal S}(M)\quad\&\quad1-g_2=vf_{1}\in f_1{\mathcal S}(M),
\end{multline*}
which proves equalities \eqref{igualdad10}.
\end{proof}

\vspace{3mm}

\begin{point}{{\bf Real spectrum of rings of semialgebraic functions}} \label{rspc} For the definition and main properties of the real spectrum of a ring see \cite[Ch.7]{bcr}. Let $\Spec_r({\mathcal S}(M))$ be the real spectrum of ${\mathcal S}(M)$, consisting of all \em prime cones \em in ${\mathcal S}(M)$, that is, those subsets $\alpha\subset{\mathcal S}(M)$ such that $f^2\in\alpha$ for every $f\in{\mathcal S}(M)$ and 
$$
i)\hspace{2mm}\alpha+\alpha\subset\alpha; \quad ii)\hspace{2mm}\alpha\cdot\alpha\subset\alpha; \quad iii)\hspace{1mm}-1\notin\alpha\quad\&\quad iv)\hspace{2mm}\text{ If } fg\in\alpha\text{ then either} f\in\alpha\text{ or }-g\in\alpha.
$$ 
It was proved in \cite{s} that ${\mathcal S}(M)$ is a real closed ring. This implies that the \em support map \em 
$$
\Spec_r({\mathcal S}(M))\to\Specs(M),\, \alpha\mapsto \gtp(\alpha):=\alpha\cap(-\alpha)
$$
is bijective and, obviously, it preserves inclusions. For every $\alpha\in\Spec_r({\mathcal S}(M))$ we denote $\kappa_{\alpha}$ the quotient ring of ${\mathcal S}(M)/\gtp(\alpha)$. It was firstly proved in \cite{s1} and later on in a different way in \cite{g}, that $\kappa_{\alpha}$ is a real closed field whose strictly positive elements are the quotients $(f+\gtp(\alpha))/(g+\gtp(\alpha))$ such that $fg\in\alpha\setminus(-\alpha)$. Given $f\in{\mathcal S}(M)$ it is usual to denote $f(\alpha):=f+\gtp(\alpha)\in\kappa_{\alpha}$. Hence, $f(\alpha)>0$ is equivalent to $f\in\alpha\setminus(-\alpha)$.
\end{point}

\vspace{3mm}

\begin{point}{{\bf Induced spectral morphisms}} \label{spcmpp} Let $\pi:M\to N$ be a semialgebraic map between semialgebraic subsets $M$ and $N$. All through this article we denote 
$$
\varphi_{\pi}:{\mathcal S}(N)\to{\mathcal S}(M),\, f\mapsto f\circ\pi.
$$  
This ring homomorphism induces a continuous map
$$
\Specs(\pi):\Specs(M)\to\Specs(N),\, \gtp\mapsto\varphi_{\pi}^{-1}(\gtp)
$$
where both spaces are endowed with the Zariski topology. Denote respectively $\betas M$ and $\betas N$ the subsets of $\Specs(M)$ and $\Specs(N)$ consisting of the maximal ideals of ${\mathcal S}(M)$ and ${\mathcal S}(N)$. Let ${\tt r}_N:\Specs(N)\to\betas N$ be the retraction that maps each prime ideal in ${\mathcal S}(N)$ to the unique maximal ideal containing it. It is proved in \cite[1.6.2]{b}, \cite[1.2]{dmo} that ${\tt r}_N$ is continuous. Thus, the composition 
$$
\betas\pi:={\tt r}_N\circ\Specs(\pi)|_{\betas M}:\betas M\to\betas N
$$ 
is continuous too. Given a function $f\in{\mathcal S}(M)$ we will denote
$$
\Dd_{\Specs(M)}(f):=\{\gtp\in\Specs(M):\, f\notin\gtp\}\quad\&\quad\Dd_{\betas M}(f):=\Dd_{\Specs(M)}(f)\cap\betas M.
$$
For each $x\in M$ consider the maximal ideal $\gtm_x:=\{f\in{\mathcal S}(M):\, f(x)=0\}$ of ${\mathcal S}(M)$, and let ${\tt j}_M:M\hookrightarrow\betas M,\, x\mapsto\gtm_x$. It is proved in \cite[4.7]{fg2} that the pair $(\betas M,{\tt j}_M)$ is a Hausdorff compactification of $M$, which is called the \em semialgebraic Stone--\v{C}ech compactification \em of $M$. Since ${\tt j}_N(N)$ and ${\tt j}_M(M)$ are respectively dense in $\betas N$ and $\betas M$, the map $\betas\pi$ is the unique continuous map making commutative the square 
$$
\xymatrix{
M\,\ar[d]_\pi\ar@{^{(}->}[r]^{\hspace{4mm}{\tt j}_M\quad}&\betas M\ar[d]_{\betas\pi}\\
N\,\ar@{^{(}->}[r]^{\hspace{4mm}{\tt j}_N\quad}&\betas N
}
$$
The difference $\partial M:=\betas M\setminus{\tt j}_M(M)$ is \em the remainder \em of the compactification $(\betas M,{\tt j}_M)$. 
\end{point}

To finish this preliminary section we recall some elementary notions concerning finiteness of ring homomorphisms and fix some notations. 

\begin{defns} \em (1) Given two rings (commutative with unity) $A$ and $B$, and a ring homomorphism $\varphi:A\to B$ we consider in $B$ the structure of $A$-algebra given by the multiplication $a\cdot b:=\varphi(a)b$ for every $a\in A$ and $b\in B$.

\noindent (2) It is said that $\varphi$ is \em finite, \em or that $B$ is a \em finite \em $A$-algebra, if there exist finitely many elements $b_1,\dots,b_n\in B$ such that $B=Ab_1+\cdots+Ab_n$.

\noindent (3) An element $b\in B$ is \em integral \em over $A$ if there exists a monic polynomial ${\tt p}({\tt t})\in A[\tt t]$ such that ${\tt p}(b)=0$. If every element in $B$ is integral over $A$ then it is said that $B$ is an \em integral \em $A$-algebra, or that $\varphi$ is an \em integral \em homomorphism.

\noindent (4) Given $b_1,\dots,b_n\in B$ the image of the evaluation homomorphism
$$
A[{\tt t}_1,\dots,{\tt t}_n]\to B,\, {\tt p}({\tt t}_1,\dots,{\tt t}_n)\mapsto {\tt p}(b_1,\dots,b_n)
$$ 
is denoted $A[b_1,\dots,b_n]$. If this homomorphism is surjective it is said that $B$ is a \em finitely generated \em $A$-algebra. If $n=1$ it is said that $\varphi$ is \em simple \em or that $B$ is a \em simple \em $A$-algebra.

Recall (see e.g.  \cite[5.2]{am}), that $B$ is a finite $A$-algebra if and only if $B$ is an integral and finitely generated $A$-algebra.

\noindent (5) For every ideal $\gta$ in $A$ we will denote $\gta B$ the smallest ideal of $B$ containing $\varphi(\gta)$,  without any explicit mention to the homomorphism $\varphi$.

\end{defns}

\begin{remark} \label{cons}\em Consider the following commutative square of ring homomorphisms with unity,   
$$
\hspace{2.5cm}\xymatrix{
A_1\ar[rr]_{\quad}\ar@{->}[d]&&B_1\ar@{->}[d]\\
A_2\ar[rr]_{\quad}&&B_2&&
}
$$
and suppose that the homomorphism $B_1\to B_2$ is surjective. Then, it follows straightforwardly from the definitions that if $B_1$ is a finite (resp. integral, simple or finitely generated) $A_1$-algebra then $B_2$ is a finite (resp. integral, simple or finitely generated) $A_2$-algebra.
\end{remark}

\section{Finiteness of semialgebraic homomorphisms}\label{s3}

\begin{prop} \label{fit} Let $M\subset\R^m$ be a semialgebraic set and let ${\tt j}:\R\hookrightarrow{\mathcal S}(M)$ be the ring homomorphism that maps each real number $r$ to the constant function 
$$
{\tt j}(r):M\to\R,\, x\mapsto r,
$$
which endows ${\mathcal S}(M)$ with an structure of $\R$-algebra. Then, the following conditions are equivalent:

\noindent \em (1) \em $M$ is a finite set.

\noindent \em (2) \em ${\mathcal S}(M)$ is a finite $\R$-algebra.

\noindent \em (3) \em ${\mathcal S}(M)$ is an integral $\R$-algebra.

\noindent \em (4) \em ${\mathcal S}(M)$ is a simple $\R$-algebra.

\noindent \em (5) \em ${\mathcal S}(M)$ is a finitely generated $\R$-algebra.

\noindent \em (6) \em ${\mathcal S}(M)$ is an artinian ring.
\end{prop}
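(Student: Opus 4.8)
First I would dispatch the implications issuing from $(1)$, which form the easy direction. If $M=\{p_1,\dots,p_k\}$ is finite, every function $M\to\R$ is automatically continuous (the topology of $M$ is discrete) and has finite, hence semialgebraic, graph, so $\mathcal S(M)$ is the $\R$-algebra $\R^k$ of all functions on $M$; it is generated as an $\R$-module by the idempotents ${\bf 1}_{\{p_i\}}$, which gives $(1)\Rightarrow(2)$, and it equals $\R[a]$ for $a:=\sum_{i=1}^k i\,{\bf 1}_{\{p_i\}}$, since the Vandermonde matrix on the distinct nodes $1,\dots,k$ is invertible and so $1,a,\dots,a^{k-1}$ is an $\R$-basis of $\R^k$, which gives $(1)\Rightarrow(4)$. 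A finite algebra over any ring is integral and finitely generated, and a finite $\R$-algebra is a finite-dimensional $\R$-vector space and hence artinian (the descending chain condition on ideals following from the one on subspaces); thus $(2)$ implies $(3)$, $(5)$ and $(6)$, and trivially $(4)\Rightarrow(5)$. Hence $(1)$ implies all the other conditions, and it remains to prove $(3)\Rightarrow(1)$, $(6)\Rightarrow(1)$ and $(5)\Rightarrow(1)$.

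For $(3)\Rightarrow(1)$ I would argue directly with the coordinate functions: if $\mathcal S(M)$ is integral over $\R$, each $\x_i|_M$ satisfies a monic equation ${\tt p}_i(\x_i|_M)=0$ with ${\tt p}_i\in\R[\t]$; since ${\tt p}_i\ne0$, its zero set $Z_i\subset\R$ is finite, so the $i$-th coordinate of every point of $M$ lies in $Z_i$, and therefore $M\subset Z_1\times\cdots\times Z_m$ is finite. For $(6)\Rightarrow(1)$ I would count maximal ideals: each $\gtm_x:=\{f\in\mathcal S(M):f(x)=0\}$ is maximal (it is the kernel of the surjective evaluation $\mathcal S(M)\to\R,\,f\mapsto f(x)$), and these ideals are pairwise distinct, since for $x\ne y$ one can pick $i$ with $\x_i(x)\ne\x_i(y)$ and then $\x_i|_M-\x_i(x)\in\gtm_x\setminus\gtm_y$; as an artinian commutative ring has only finitely many maximal ideals \cite{am}, $M$ is finite.

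The implication $(5)\Rightarrow(1)$ is the heart of the matter, since being a finitely generated (or simple) $\R$-algebra looks much weaker than being a finite one; note that the naive attempt fails, because an infinite semialgebraic set need not contain an infinite \emph{semialgebraic} discrete closed subset, so one cannot directly produce an infinite ascending chain of ideals of $\mathcal S(M)$. The plan is to reduce to a one-dimensional closed model. If $M$ were infinite then $\dim M\ge1$, because a zero-dimensional semialgebraic set is finite \cite{bcr}; then, using a cell decomposition (or triangulation) of $M$ \cite{bcr}, a top-dimensional cell is semialgebraically homeomorphic to some open cube $(0,1)^d$ with $d\ge1$, and the image of the compact segment $[1/3,2/3]\times\{1/2\}^{d-1}$ under such an identification is a compact, hence closed in $M$, semialgebraic subset $C\subset M$ semialgebraically homeomorphic to $[0,1]$. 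By Lemma \ref{tietze}(2) the restriction $\mathcal S(M)\to\mathcal S(C)\cong\mathcal S([0,1])$ is a surjective homomorphism of $\R$-algebras, so by Remark \ref{cons} the $\R$-algebra $\mathcal S([0,1])$ would be finitely generated as well; but a finitely generated $\R$-algebra is noetherian \cite{am}, whereas $\mathcal S([0,1])$ is not, since the ideals $I_n:=\{f\in\mathcal S([0,1]):f|_{[0,1/n]}\equiv0\}$ form a strictly ascending chain (for instance $\t\mapsto\max(0,\t-\tfrac{1}{n+1})$ lies in $I_{n+1}\setminus I_n$). This contradiction forces $M$ finite, and collecting all the implications yields the six equivalences.

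I expect the geometric step — locating the closed arc $C\cong[0,1]$ inside an infinite semialgebraic $M$ — to be the only real obstacle; everything around it is elementary commutative algebra. (Incidentally, this same reduction, through Remark \ref{cons}, also reproves $(3)\Rightarrow(1)$ and $(6)\Rightarrow(1)$ in one stroke, since $\x|_{[0,1]}$ is not integral over $\R$ and $\mathcal S([0,1])$, not being noetherian, is not artinian; I would nonetheless keep the shorter direct arguments given above.)
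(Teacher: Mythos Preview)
Your proof is correct, and the overall scheme of implications is complete. The genuine divergence from the paper is in the ``hard'' direction $(5)\Rightarrow(1)$ (and, to a lesser extent, $(6)\Rightarrow(1)$).

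For $(5)\Rightarrow(1)$ the paper argues purely algebraically: a finitely generated $\R$-algebra is noetherian, hence has finitely many minimal primes; since ${\mathcal S}(M)$ is a Gelfand ring (Proposition~\ref{Gelfand}) and has finite Krull dimension equal to $\dim M$ (citing \cite{gr,fg1}), the retraction $\Min({\mathcal S}(M))\to\Max({\mathcal S}(M))$ is surjective, so there are only finitely many maximal ideals, whence $M$ is finite. Your argument is instead geometric: you embed a closed semialgebraic arc $C\cong[0,1]$ into any infinite $M$ via cell decomposition, push the finitely-generated hypothesis down to ${\mathcal S}([0,1])$ via Tietze (Lemma~\ref{tietze}(2)) and Remark~\ref{cons}, and then exhibit an explicit strictly ascending chain of ideals in ${\mathcal S}([0,1])$. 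Your route is more elementary in that it bypasses both the Gelfand property and the Krull-dimension theorem for ${\mathcal S}(M)$; the paper's route, on the other hand, stays within commutative algebra and avoids the cell-decomposition machinery. Similarly, for $(6)\Rightarrow(1)$ the paper invokes $\dim{\mathcal S}(M)=\dim M$, whereas you use only that an artinian ring has finitely many maximal ideals together with the injectivity of $x\mapsto\gtm_x$; your argument is again the lighter one. The remaining implications (coordinate functions for $(3)\Rightarrow(1)$, the Vandermonde generator for $(1)\Rightarrow(4)$, etc.) are essentially the same in both proofs.
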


\begin{proof} We will show that 
$$
(1)\Longrightarrow (2)\Longrightarrow (3)\Longrightarrow (1)\quad\&\quad (1)\Longrightarrow (4)\Longrightarrow (5)\Longrightarrow (6)\Longrightarrow (1).
$$ 
Suppose first that $M$ is finite with, say, $k$ elements. Then the topology in $M$ is discrete, and ${\mathcal S}(M)=\R^k$ is a finite $\R$-algebra. The implication $(2)\Longrightarrow (3)$ follows from \cite[Prop. 5.1]{am}, and suppose now that condition $(3)$ holds true. Let $f_i:=\pi_i|_M\in{\mathcal S}(M)$ be the restriction of the projection 
$$
\pi_i:\R^m\to\R,\,x:=(x_1,\dots,x_m)\to x_i. 
$$
For $1\leq i\leq m$ there exists a monic polynomial ${\tt p}_i\in\R[\t]$ such that ${\tt p}_i(f_i)=0$. Therefore $f_i(x)$ is, for every $x\in M$, one of the (finitely many) roots of ${\tt p}_i(\t)$, so $f_i(M)$ is finite. Consequently, $M\subset\prod_{i=1}^mf_i(M)$ is finite too.

Let us prove now that $(1)\Longrightarrow (4)$. The finiteness of $M:=\{p_1,\ldots,p_k\}\subset\R^m$ implies the existence of a function $f\in{\mathcal S}(M)$ such that $f(p_i)\neq f(p_j)$ if $i\neq j$. We prove this by induction on $m$, the case $m=1$ being trivial because the function $f:M\to\R,\, x\mapsto x$ do the job. 

For $m>1$ we can suppose, after a linear change of coordinates, that $p_i:=(a_{1i},\ldots,a_{mi})$ with $a_{1i}\neq a_{1j}$ if $i\neq j$. Let 
$$
\pi:\R^m\to\R^{m-1},\, x:=(x_1,\ldots,x_m)\mapsto x':=(x_1,\ldots,x_{m-1})
$$
be the projection onto the first $m-1$ coordinates and notice that the points $q_j:=\pi(p_j)$ are pairwise different for $1\leq j\leq k$. Denote $N:=\{q_1,\dots,q_k\}\subset\R^{m-1}$ and, by the inductive hypothesis, there exists $g\in{\mathcal S}(N)$ such that $g(q_i)\neq g(q_j)$ if $i\neq j$. As $\pi(M)=N$ the function $f:=g\circ\pi|_M\in{\mathcal S}(M)$ satisfies $f(p_i)=g(q_i)\neq g(q_j)=f(p_j)$ if $i\neq j$.

Let us check now that ${\mathcal S}(M)=\R[f]$, which proves $(4)$. Indeed, let $a_j:=f(p_j)$ for $1\leq j\leq k$ and observe that each function
$$
f_i:=\frac{\prod_{j\neq i}(f-a_j)}{\prod_{j\neq i}(a_i-a_j)}\in\R[f]
$$
satisfies $f_i(p_i)=1$ and $f_i(p_j)=0$ if $i\neq j$. Thus, each function $g\in{\mathcal S}(M)$ satisfies 
$$
g=\sum_{i=1}^kg(p_i)f_i\in\R[f],
$$ 
as wanted.

The implication $(4)\Longrightarrow (5)$ is evident. Suppose now that ${\mathcal S}(M)$ is a finitely generated $\R$-algebra. In particular it is a noetherian ring, so it has finitely many minimal prime ideals, because the radical $\sqrt{(0)}$ of the zero ideal is an intersection of finitely many prime ideals and the minimal prime ideals of ${\mathcal S}(M)$ are among them. On the other hand  every prime ideal in ${\mathcal S}(M)$ is contained, by Theorem \ref{Gelfand}, in a unique maximal ideal. Thus, there exists a well defined map 
$$
{\tt r}_M:\Min\,({\mathcal S}(M))\to\Max\,({\mathcal S}(M)),\, \gtp\mapsto{\tt r}(\gtp)
$$
from the set $\Min\,({\mathcal S}(M))$ of minimal prime ideals of ${\mathcal S}(M)$ to the set $\Max\,({\mathcal S}(M))$ of maximal ideals, where ${\tt r}_M(\gtp)$ is the unique maximal ideal of ${\mathcal S}(M)$ containing $\gtp$. In addition, the map ${\tt r}_M$ is surjective because the Krull dimension of ${\mathcal S}(M)$ is finite; indeed it coincides with $\dim\, (M)$, see e.g. \cite[Prop. 1.4]{gr} or \cite[Thm. 1.1]{fg1}. Hence, also $\Max\,({\mathcal S}(M))$ is a finite set. As the map $M\to\Max({\mathcal S}(M)),\, x\mapsto\gtm_x$ is injective, $M$ is finite too.  Consequently, the Krull dimension of ${\mathcal S}(M)$ equals $\dim\,(M)=0$. Therefore ${\mathcal S}(M)$ is a zero dimensional noetherian ring and, by \cite[Thm 8.5]{am}, it is an artinian ring.

Finally, if ${\mathcal S}(M)$ is artinian then $\dim(M)=\dim({\mathcal S}(M))=0$, and $M$ is finite.
\end{proof}

\begin{cor} \label{ff} Let $\pi:M\to N$ be a semialgebraic map and suppose that the induced homomorphism $\varphi_{\pi}:{\mathcal S}(N)\to{\mathcal S}(M)$ is finite \em (\em resp. integral, finitely generated, simple\em)\em. Then, the fibers of $\pi$ are finite sets.
\end{cor}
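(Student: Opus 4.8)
The plan is to reduce, for each point $y\in N$, the finiteness of the fiber $\pi^{-1}(y)$ to Proposition \ref{fit} applied to the closed semialgebraic subset $F:=\pi^{-1}(y)$ of $M$. First I would note that $F$ is indeed a closed semialgebraic subset of $M$, because $\{y\}$ is closed in $N$ and $\pi$ is a (continuous) semialgebraic map. Hence, by Lemma \ref{tietze}(2), the restriction homomorphism $\rho:{\mathcal S}(M)\to{\mathcal S}(F),\, g\mapsto g|_F$ is surjective.

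Next I would consider the square of ring homomorphisms
$$
\xymatrix{
{\mathcal S}(N)\ar[rr]^{\varphi_{\pi}}\ar[d]_{\ev_y}&&{\mathcal S}(M)\ar[d]^{\rho}\\
\R\,\ar@{^{(}->}[rr]&&{\mathcal S}(F)
}
$$
where $\ev_y:{\mathcal S}(N)\to\R,\, f\mapsto f(y)$ is evaluation at $y$ and the bottom arrow is the inclusion of the constants. This square commutes because the restriction of $\pi$ to $F$ is the constant map with value $y$: for every $f\in{\mathcal S}(N)$ one has $\rho(\varphi_{\pi}(f))=(f\circ\pi)|_F=f(y)\,{\bf 1}_F$, which is exactly the image of $f$ under the other composite.

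Now I would invoke Remark \ref{cons} with $A_1={\mathcal S}(N)$, $B_1={\mathcal S}(M)$, $A_2=\R$ and $B_2={\mathcal S}(F)$: since the square above commutes and $\rho:B_1\to B_2$ is surjective, the hypothesis that ${\mathcal S}(M)$ is a finite (resp. integral, finitely generated, simple) ${\mathcal S}(N)$-algebra forces ${\mathcal S}(F)$ to be a finite (resp. integral, finitely generated, simple) $\R$-algebra. Finally, by the equivalences $(1)\Leftrightarrow(2)\Leftrightarrow(3)\Leftrightarrow(4)\Leftrightarrow(5)$ in Proposition \ref{fit}, $F=\pi^{-1}(y)$ is a finite set; as $y\in N$ was arbitrary, all fibers of $\pi$ are finite. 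There is no serious obstacle here: the only point that needs care is the commutativity of the square, i.e. the observation that $g\circ\pi$ is constant on each fiber of $\pi$, which is immediate.
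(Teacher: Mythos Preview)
Your proof is correct and follows essentially the same route as the paper: restrict to a fiber $F=\pi^{-1}(y)$, use Lemma \ref{tietze}(2) to get surjectivity of ${\mathcal S}(M)\to{\mathcal S}(F)$, apply Remark \ref{cons} to the commutative square with $\R\hookrightarrow{\mathcal S}(F)$ on the bottom, and conclude by Proposition \ref{fit}. If anything, you are slightly more explicit than the paper in checking commutativity of the square and in tracking all four finiteness conditions through Remark \ref{cons}.
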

\begin{proof} Pick a point $p\in N$ and consider the fiber $P:=\pi^{-1}(p)$, which is a closed semialgebraic subset of $M$. By Lemma \ref{tietze} the homomorphism ${\mathcal S}(M)\to{\mathcal S}(P),\, f\mapsto f|_P$ is surjective. Consider the commutative square 
$$
\hspace{2cm}\xymatrix{
{\mathcal S}(N)\ar[rr]^{\varphi_{\pi}\quad}_{\quad}\ar@{->}[d]&&{\mathcal S}(M)\ar@{->}[d]\\
{\mathcal S}(\{p\})=\R\ar[rr]^{\tt j\quad}_{\quad}&&{\mathcal S}(P)&&
}
$$
where the vertical arrows are the restriction mappings and ${\tt j}$ transforms each real number $r$ into the constant function that only attains the value $r$. Applying Remark \ref{cons} it follows that ${\mathcal S}(P)$ is a finite $\R$-algebra and, by Proposition \ref{fit}, the fiber $P$ is finite.
\end{proof}

\begin{cor} \label{nfin} Let $\pi:M\to N$ be a surjective semialgebraic map between two semialgebraic sets $M$ and $N$, where $N$ is finite, and let $\varphi_{\pi}:{\mathcal S}(N)\to{\mathcal S}(M)$ be the induced homomorphism.  Then, the following conditions are equivalent:

\vspace{1mm}

\noindent \em (1) \em The set $M$ is finite.

\vspace{1mm}

\noindent \em (2) \em The homomorphism $\varphi_{\pi}$ is finite. 

\vspace{1mm}

\noindent \em (3) \em The homomorphism $\varphi_{\pi}$ is integral.

\vspace{1mm} 

\noindent \em (4) \em The homomorphism $\varphi_{\pi}$ is simple. 

\vspace{1mm}

\noindent \em (5) \em The homomorphism $\varphi_{\pi}$ is finitely generated. 
\end{cor}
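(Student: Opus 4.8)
The plan is to mimic the proof of Proposition~\ref{fit} by establishing the cycles
$$(1)\Longrightarrow(2)\Longrightarrow(3)\Longrightarrow(1)\quad\&\quad(1)\Longrightarrow(4)\Longrightarrow(5)\Longrightarrow(1),$$
transferring each finiteness condition along the tower $\R\hookrightarrow{\mathcal S}(N)\stackrel{\varphi_{\pi}}{\longrightarrow}{\mathcal S}(M)$ and then invoking Proposition~\ref{fit} for $M$. The key observation is that, since $N$ is finite, Proposition~\ref{fit} already gives that ${\mathcal S}(N)$ is a finite —hence integral, finitely generated and even simple— $\R$-algebra, and that the composition $\R\hookrightarrow{\mathcal S}(N)\stackrel{\varphi_{\pi}}{\to}{\mathcal S}(M)$ is precisely the canonical inclusion ${\tt j}:\R\hookrightarrow{\mathcal S}(M)$ of Proposition~\ref{fit}, because $\varphi_{\pi}(r)=r\circ\pi=r$ for every $r\in\R$. (If $N=\varnothing$ then $M=\varnothing$ by surjectivity and every assertion is vacuous, so we may assume $M,N\neq\varnothing$.)

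For the implications towards $(1)$ I would use transitivity of finiteness conditions in towers. If $\varphi_{\pi}$ is finite (resp.\ integral, resp.\ finitely generated), then composing with the finite (resp.\ integral, resp.\ finitely generated) homomorphism $\R\hookrightarrow{\mathcal S}(N)$ shows that ${\tt j}:\R\to{\mathcal S}(M)$ is finite (resp.\ integral, resp.\ finitely generated) —these are the standard transitivity statements, see e.g.\ \cite{am}. Hence ${\mathcal S}(M)$ is a finite (resp.\ integral, resp.\ finitely generated) $\R$-algebra and Proposition~\ref{fit} yields that $M$ is finite. This proves $(2)\Rightarrow(1)$, $(3)\Rightarrow(1)$ and $(5)\Rightarrow(1)$; together with the trivial implications $(2)\Rightarrow(3)$ (a finite algebra is integral, \cite[Prop.~5.1]{am}) and $(4)\Rightarrow(5)$, it remains only to prove $(1)\Rightarrow(2)$ and $(1)\Rightarrow(4)$.

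So assume $M$ is finite. By Proposition~\ref{fit} the $\R$-algebra ${\mathcal S}(M)$ is finite, i.e.\ generated as an $\R$-module by finitely many elements; a fortiori those elements generate ${\mathcal S}(M)$ as an ${\mathcal S}(N)$-module through $\varphi_{\pi}$, so $\varphi_{\pi}$ is finite and $(1)\Rightarrow(2)$ holds. Moreover, the proof of Proposition~\ref{fit} exhibits an $f\in{\mathcal S}(M)$ with ${\mathcal S}(M)=\R[f]$; since $\varphi_{\pi}$ is an $\R$-algebra homomorphism, $\R\subseteq\varphi_{\pi}({\mathcal S}(N))$ inside ${\mathcal S}(M)$, whence
$${\mathcal S}(M)=\R[f]\subseteq\varphi_{\pi}({\mathcal S}(N))[f]\subseteq{\mathcal S}(M),$$
that is ${\mathcal S}(M)=\varphi_{\pi}({\mathcal S}(N))[f]$, so $\varphi_{\pi}$ is simple and $(1)\Rightarrow(4)$ holds.

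I expect the only delicate point to be bookkeeping of which finiteness notions are transitive: module-finiteness, integrality and finite generation all pass to compositions, but \emph{simpleness does not}, which is why $(4)$ must be routed to $(1)$ through $(5)$ rather than attempted directly, and why for $(1)\Rightarrow(4)$ one uses the explicit generator $f$ coming from Proposition~\ref{fit} instead of a transitivity argument. Apart from this, the corollary is a routine reduction to Proposition~\ref{fit}.
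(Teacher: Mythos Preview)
Your proof is correct. The forward direction $(1)\Rightarrow(2),(3),(4),(5)$ matches the paper's argument essentially verbatim. For the converse, however, the paper takes a different and slightly slicker route: rather than invoking transitivity of the various finiteness notions along the tower $\R\hookrightarrow{\mathcal S}(N)\to{\mathcal S}(M)$, it applies Corollary~\ref{ff} directly---each of conditions (2)--(5) forces the fibers of $\pi$ to be finite, and since $N$ is finite, $M=\bigcup_{y\in N}\pi^{-1}(y)$ is a finite union of finite sets. This single sentence handles all four implications uniformly and sidesteps the bookkeeping you flag about which properties pass to compositions. Your approach, on the other hand, is self-contained in that it does not rely on Corollary~\ref{ff} (though that corollary is itself a one-line consequence of Proposition~\ref{fit} via Remark~\ref{cons}), and it makes the dependence on Proposition~\ref{fit} for $M$ more explicit.
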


\begin{proof} Notice that $\R\hookrightarrow{\mathcal S}(N)\to{\mathcal S}(M)$. Thus, if $M$ is finite it follows from Proposition \ref{fit} that ${\mathcal S}(M)$ is a finite, integral, simple and finitely generated $\R$-algebra, so it is a finite, integral, simple and finitely generated ${\mathcal S}(N)$-algebra. This proves that condition (1) implies conditions (2), (3), (4) and (5). 

Conversely, suppose that one among conditions (2), (3), (4) and (5) holds. Then, by Corollary \ref{ff}, the fibers of $\pi$ are finite. Hence, $N$ being finite, the same holds for $M$.
\end{proof}

\begin{prop} \label{ci} Let $\pi:M\to N$ be a closed and surjective semialgebraic map between two semialgebraic sets $M$ and $N$. Let $Y$ be a closed semialgebraic subset of $N$ and let us denote $X:=\pi^{-1}(Y)$. Suppose that the map 
$$
\pi|_{M\setminus X}:M\setminus X\to N\setminus Y
$$ 
is injective. Then, the homomorphism $\varphi_{\pi}:{\mathcal S}(N)\to{\mathcal S}(M)$ is finite \em (\em resp. integral, finitely generated or simple\em) \em if and only if ${\widetilde\varphi}_{\pi}:{\mathcal S}(Y)\to{\mathcal S}(X),\, g\mapsto g\circ(\pi|_X)$ is finite \em (\em integral, finitely generated or simple\em).
\end{prop}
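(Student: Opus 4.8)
The implication ``$\varphi_{\pi}$ finite (resp. integral, finitely generated, simple) $\Rightarrow$ $\widetilde{\varphi}_{\pi}$ finite (resp. integral, finitely generated, simple)'' is the soft half. Since $Y$ is closed in $N$ and $X=\pi^{-1}(Y)$ is closed in $M$, Lemma \ref{tietze}(2) makes the restriction homomorphisms ${\mathcal S}(N)\to{\mathcal S}(Y)$ and ${\mathcal S}(M)\to{\mathcal S}(X)$ surjective; together with $\varphi_{\pi}$ and $\widetilde{\varphi}_{\pi}$ they form a commutative square (commutativity because $\pi(X)\subset Y$), so Remark \ref{cons} yields this implication at once for all four finiteness notions. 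Nothing beyond $X=\pi^{-1}(Y)$ is used here.

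For the converse the crucial point is the claim: \emph{every $g\in{\mathcal S}(M)$ with $g|_X=0$ lies in the image of $\varphi_{\pi}$.} I would argue as follows. The function $g$ is constant on every fibre of $\pi$: over a point of $Y$ the fibre is contained in $X$, where $g\equiv 0$, and over a point of $N\setminus Y$ the fibre is a single point, because $\pi|_{M\setminus X}$ is injective. Hence, $\pi$ being surjective, there is a well-defined map $\widetilde{g}:N\to\R$ with $\widetilde{g}\circ\pi=g$. Now $\pi$, being a closed continuous surjection, is an identification (quotient) map, so $\widetilde{g}$ is continuous; and the graph of $\widetilde{g}$ equals $(\pi\times\id_{\R})(\Gamma_g)$, the image of the graph $\Gamma_g$ of $g$ under a semialgebraic map, hence is semialgebraic. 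Therefore $\widetilde{g}\in{\mathcal S}(N)$ and $g=\varphi_{\pi}(\widetilde{g})$, as claimed (and moreover $\widetilde{g}|_Y=0$, though this is not needed).

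Granting the claim, the converse is routine bookkeeping with the surjectivity of the restriction maps: starting from an arbitrary $f\in{\mathcal S}(M)$ one ``corrects'' $f$ using a presentation of $\widetilde{\varphi}_{\pi}$ so that the correction vanishes on $X$, and then applies the claim. Concretely, if ${\mathcal S}(X)=\sum_{i=1}^{n}{\mathcal S}(Y)b_i$, lift the $b_i$ to $\widehat b_i\in{\mathcal S}(M)$ and, writing $f|_X=\sum_i\bar c_i b_i$ with $\bar c_i\in{\mathcal S}(Y)$, lift the $\bar c_i$ to $c_i\in{\mathcal S}(N)$; then $g:=f-\sum_i\varphi_{\pi}(c_i)\widehat b_i$ vanishes on $X$, so $g=\varphi_{\pi}(\widetilde g)$ and $f\in{\mathcal S}(N)\cdot{\bf 1}_M+\sum_i{\mathcal S}(N)\widehat b_i$, proving ${\mathcal S}(M)$ is a finite ${\mathcal S}(N)$-module. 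For the integral case, lift a monic relation of $f|_X$ over ${\mathcal S}(Y)$ to a monic ${\tt q}({\tt t})\in{\mathcal S}(N)[{\tt t}]$; then ${\tt q}(f)$ vanishes on $X$, so ${\tt q}(f)=\varphi_{\pi}(\widetilde g)$ and the monic polynomial ${\tt q}({\tt t})-\widetilde g\in{\mathcal S}(N)[{\tt t}]$ annihilates $f$. The finitely generated and simple cases are identical, replacing module generators by ${\mathcal S}(Y)$-algebra generators of ${\mathcal S}(X)$ (one generator in the simple case) and the linear combination by a polynomial expression.

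The only genuine obstacle is the claim, and within it the verification that $\widetilde g$ is a \emph{continuous semialgebraic} function: continuity is precisely the identification-map property of the closed surjection $\pi$, and semialgebraicity is automatic once one notes that the graph of $\widetilde g$ is the $(\pi\times\id_{\R})$-image of $\Gamma_g$. The observation that drives the whole argument is that $g|_X=0$ together with the injectivity of $\pi|_{M\setminus X}$ forces $g$ to be constant along the fibres of $\pi$; everything else is formal.
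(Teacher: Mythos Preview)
Your proposal is correct and follows essentially the same route as the paper's proof: the easy direction via Remark \ref{cons}, then the key claim that any $g\in{\mathcal S}(M)$ vanishing on $X$ descends through $\pi$ to a function $\widetilde g\in{\mathcal S}(N)$ (the paper phrases this as $\varphi_{\pi}({\J}(Y))={\J}(X)$), followed by the same bookkeeping with lifts. The only cosmetic differences are that you invoke the quotient-map property of a closed continuous surjection for continuity while the paper writes out $\widetilde g^{-1}(C)=\pi(g^{-1}(C))$ explicitly, and you write the graph as $(\pi\times\id_{\R})(\Gamma_g)$ while the paper uses the map $\rho:x\mapsto(\pi(x),g(x))$; these are the same arguments.
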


\begin{proof} The only if part follows at once from Remark \ref{cons} because we have a commutative square
$$
\hspace{2.5cm}\xymatrix{
{\mathcal S}(N)\ar[rr]^{\hspace{4mm}\varphi_{\pi}\quad}_{\quad}\ar@{->}[d]&&{\mathcal S}(M)\ar@{->}[d]\\
{\mathcal S}(Y)\ar[rr]^{\hspace{4mm}\widetilde\varphi_{\pi}\quad}_{\quad}&&{\mathcal S}(X)&&
}
$$
where the vertical arrows are the restriction homomorphisms, which are surjective by Lemma \ref{tietze} because $X$ and $Y$ are, respectively, closed subsets of $M$ and $N$. For the converse, denote 
$$
{\J}(X):=\{f\in{\mathcal S}(M):X\subset\Zz_M(f)\}\quad\&\quad{\J}(Y):=\{g\in{\mathcal S}(N):  Y\subset\Zz_N(g)\}.
$$ 
Let us see first that $\varphi_{\pi}({\J}(Y))={\J}(X)$. For $g\in{\J}(Y)$ and $x\in X$ we have $\pi(x)\in Y$ and $(\varphi_{\pi}(g))(x)=(g\circ\pi)(x)=g(\pi(x))=0$, which proves the inclusion $\varphi_{\pi}({\J}(Y))\subset{\J}(X)$. Conversely, let $f\in{\J}(X)$ and define the function 
$$
g:N\to\R,\, v\mapsto f(u)\ \text{ where }\ u\in\pi^{-1}(v)\ \text{ is arbitrary }.
$$
Let us prove that $g\in{\mathcal S}(N)$. To check that $g$ is a well defined function it suffices to see that for every point $v\in N$ the function $f$ is constant on the (nonempty) fiber $\pi^{-1}(v)$. In case $v\in N\setminus Y$ there is nothing to check  because $\pi^{-1}(v)$ is a singleton. On the other hand, if $v\in Y$ and $u\in\pi^{-1}(v)\subset\pi^{-1}(Y)=X$ we have $f(u)=0$. To prove that $g$ is continuous note that the triangle
$$
\xymatrix{
M\ar@{->}[r]^{\quad\pi\quad}\ar@{->}[rd]^{f\quad}&N\ar@{->}[d]^{g\quad}\\
&\R}
$$
is commutative.  Thus, the preimage $g^{-1}(C)=\pi(f^{-1}(C))$ of a closed semialgebraic subset $C$ of $\R$ is a closed subset of $N$ because $f$ is continuous and $\pi$ is a closed semialgebraic map.  To see that $g$ is a semialgebraic function, and therefore $g\in{\mathcal S}(N)$, it suffices to check that its graph $\Gamma(g)$ is a semialgebraic subset of $N\times\R$. Indeed, the mapping
$$
\rho:M\to N\times\R,\, x\mapsto (\pi(x),f(x))
$$
is semialgebraic and, since $\pi:M\to N$ is surjective, $\Gamma(g)=\rho(M)$ is a semialgebraic set.

Thus $g\in{\mathcal S}(N)$ and, in fact, $g\in{\J}(Y)$ because given $y\in Y$ there exists, since $\pi$ is surjective, a point $x\in X$ such that $y=\pi(x)$, so $g(y)=g(\pi(x))=f(x)=0$. Moreover, $\varphi_{\pi}(g)=g\circ\pi=f$ and the equality $\varphi_{\pi}({\J}(Y))={\J}(X)$ is proved.

Suppose now that ${\widetilde\varphi}_{\pi}$ is a finite homomorphism and let us prove that $\varphi_{\pi}$ is finite too. Let $g_1,\dots,g_r\in{\mathcal S}(X)$ such that 
$$
{\mathcal S}(X)={\mathcal S}(Y)g_1+\cdots+{\mathcal S}(Y)g_r,
$$
The constant function $G_{r+1}:M\to\R,\, x\mapsto 1$ belongs to ${\mathcal S}(M)$ and, by Lemma \ref{tietze} (2), there exist $G_1,\dots,G_r\in{\mathcal S}(M)$ with $G_i|_X=g_i$ for $1\leq i\leq r$. Let us check the equality
$$
{\mathcal S}(M)={\mathcal S}(N)G_1+\cdots+{\mathcal S}(N)G_r+{\mathcal S}(N)G_{r+1}.
$$
Given $F\in{\mathcal S}(M)$ its restriction $f:=F|_X\in{\mathcal S}(X)$. Thus, there exist $h_1,\dots,h_r\in{\mathcal S}(Y)$ such that
\begin{equation}\label{EQQ}
f={\widetilde\varphi}_{\pi}(h_1)g_1+\cdots+{\widetilde\varphi}_{\pi}(h_r)g_r=(h_1\circ\pi|_X)g_1+\cdots+(h_r\circ\pi|_X)g_r. 
\end{equation}
For $1\leq i\leq r$ let $H_i\in{\mathcal S}(N)$ with $H_i|_Y=h_i$. By \eqref{EQQ} we have
$$
F-\sum_{i=1}^r(H_i\circ\pi)G_i\in{\J}(X). 
$$
Since we have proved that $\varphi_{\pi}({\J}(Y))={\J}(X)$ there exists a function $H_{r+1}\in{\J}(Y)$ with
$$
F-\sum_{i=1}^r(H_i\circ\pi)G_i=\varphi_{\pi}(H_{r+1})=H_{r+1}\circ\pi=(H_{r+1}\circ\pi)G_{r+1}.
$$ 
In other words, 
$$
F=\sum_{i=1}^{r+1}(H_i\circ\pi)G_i=\sum_{i=1}^{r+1}\varphi_{\pi}(H_i)G_i\in{\mathcal S}(N)G_1+\cdots+{\mathcal S}(N)G_r+{\mathcal S}(N)G_{r+1}.
$$
Next, suppose that ${\widetilde\varphi}_{\pi}$ is a simple homomorphism and let us prove that $\varphi_{\pi}$ enjoys this property too. Let $f\in{\mathcal S}(X)$ such that ${\mathcal S}(X)={\mathcal S}(Y)[f]$ and let $F\in{\mathcal S}(M)$ with $F|_X=f$. We will see that ${\mathcal S}(M)={\mathcal S}(N)[F]$. For each $H\in{\mathcal S}(M)$ its restriction $h:=H|_X\in{\mathcal S}(X)$. Hence, there exists a polynomial ${\tt p}\in{\mathcal S}(Y)[\tt t]$ such that $h={\tt p}(f)$. Write
$$
{\tt p}({\tt t}):=\sum_{i=0}^ng_i{\tt t}^{n-i},\ \text{ where }\ g_i\in{\mathcal S}(Y)
$$
and, for $0\leq i\leq n$, let $G_i\in{\mathcal S}(N)$ with $G_i|_{Y}=g_i$. As $\sum_{i=0}^n{\widetilde\varphi}_{\pi}(g_i)f^{n-i}={\tt p}(f)=h$ we have  
$$
H-\sum_{i=0}^n\varphi_{\pi}(G_i)F^{n-i}=H-\sum_{i=0}^n(G_i\circ\pi)F^{n-i}\in{\J}(X)=\varphi_{\pi}({\J}(Y)).
$$
Therefore there exists $G\in{\J}(Y)\subset{\mathcal S}(N)$ such that
$$
H-\sum_{i=0}^n\varphi_{\pi}(G_i)F^{n-i}=\varphi_{\pi}(G).
$$
Consequently, the polynomial ${\tt P}(\t):=(G_n+G)+\sum_{i=0}^{n-1}G_i\t^{n-i}\in{\mathcal S}(N)[\tt t]$ and $H={\tt P}(F)$.

Arguing analogously one proves that if ${\widetilde\varphi}_{\pi}$ is a finitely generated homomorphism the same holds true for $\varphi_{\pi}$. So to finish it suffices to see that $\varphi_{\pi}$ is an integral homomorphism whenever ${\widetilde\varphi}_{\pi}$ is so. Given $F\in{\mathcal S}(M)$ its restriction $f:=F|_X$ is integral over ${\mathcal S}(Y)$, that is, there exist $g_1,\dots,g_n\in{\mathcal S}(Y)$ such that
$$
f^n+\sum_{i=1}^n{\widetilde\varphi}_{\pi}(g_i)f^{n-i}=0.
$$
For $1\leq i\leq n$ let $G_i\in{\mathcal S}(N)$ such that $G_i|_Y=g_i$. Then $F^n+\sum_{i=1}^n\varphi_{\pi}(G_i)F^{n-i}\in{\J}(X)$. Since ${\J}(X)=\varphi_{\pi}({\J}(Y))$ there exists $G\in{\J}(Y)\subset{\mathcal S}(N)$ such that
$$
F^n+\sum_{i=1}^n\varphi_{\pi}(G_i)F^{n-i}=\varphi_{\pi}(G).
$$
Thus ${\tt P}(F)=0$, where
$$
{\tt P}({\tt t}):={\t}^n+\sum_{i=1}^{n-1}G_i{\tt t}^{n-i}+(G_n-G)\in{\mathcal S}(N)[\t]
$$
is a monic polynomial, so $F$ is integral over ${\mathcal S}(N)$.
\end{proof}

\begin{cor} \label{mfin} Let $\pi:M\to N$ be a surjective semialgebraic map between two semialgebraic sets $M$ and $N$ such that $Y:=\{y\in N:\cd(\pi^{-1}(y))>1\}$ is a finite set. Then, the following statements are equivalent:

\vspace{1mm}

\noindent \em (1) \em The set $X:=\pi^{-1}(Y)$ is finite. 

\vspace{1mm}

\noindent \em (2) \em The homomorphism $\varphi_{\pi}$ is finite. 

\vspace{1mm}

\noindent \em (3) \em The homomorphism $\varphi_{\pi}$ is integral. 

\vspace{1mm}

\noindent \em (4) \em The homomorphism $\varphi_{\pi}$ is simple. 

\vspace{1mm}

\noindent \em (5) \em The homomorphism $\varphi_{\pi}$ is finitely generated, that is, ${\mathcal S}(M)$ is a finitely generated ${\mathcal S}(N)$-algebra.
\end{cor}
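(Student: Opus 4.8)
The plan is to peel off the finite ``bad locus'' $Y$ and reduce the whole equivalence to Corollary \ref{nfin}, using Proposition \ref{ci} as the bridge between $\varphi_{\pi}$ and the induced homomorphism over $Y$.

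First I would verify the hypotheses of Proposition \ref{ci}. Since $Y$ is finite it is a closed semialgebraic subset of $N$ (a finite subset of $\R^n$ is closed and semialgebraic), hence $X:=\pi^{-1}(Y)$ is a closed semialgebraic subset of $M$. By the definition of $Y$, for each $v\in N\setminus Y$ we have $\cd(\pi^{-1}(v))\le 1$, and $\pi^{-1}(v)\ne\varnothing$ because $\pi$ is surjective, so $\pi^{-1}(v)$ is a singleton; consequently $\pi|_{M\setminus X}:M\setminus X\to N\setminus Y$ is injective, for if $u,u'\in M\setminus X$ have $\pi(u)=\pi(u')=:v$ then $v\notin Y$ (otherwise $u\in\pi^{-1}(Y)=X$), so $u=u'$. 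As $\pi$ is closed and surjective, Proposition \ref{ci} applies and tells us that $\varphi_{\pi}:{\mathcal S}(N)\to{\mathcal S}(M)$ is finite (resp. integral, finitely generated, simple) if and only if $\widetilde\varphi_{\pi}:{\mathcal S}(Y)\to{\mathcal S}(X),\,g\mapsto g\circ(\pi|_X)$ is finite (resp. integral, finitely generated, simple).

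Next, $\pi|_X:X\to Y$ is itself a surjective semialgebraic map, its surjectivity following from that of $\pi$ (if $y\in Y$, pick $x\in M$ with $\pi(x)=y$; then $x\in\pi^{-1}(Y)=X$), and its target $Y$ is finite. Since $\widetilde\varphi_{\pi}$ is precisely the homomorphism induced by $\pi|_X$, Corollary \ref{nfin} applies to it and yields that ``$X$ is finite'' is equivalent to each of ``$\widetilde\varphi_{\pi}$ is finite'', ``$\widetilde\varphi_{\pi}$ is integral'', ``$\widetilde\varphi_{\pi}$ is simple'' and ``$\widetilde\varphi_{\pi}$ is finitely generated''. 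Combining this with the equivalences of the previous paragraph gives the equivalence of (1)--(5), which is the assertion of the corollary.

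The argument is essentially a matter of matching hypotheses; all the real work is already carried out in Proposition \ref{ci} (building, from a function on $M$ vanishing on $X$, a function on $N$ vanishing on $Y$) and in Corollary \ref{nfin}, so I do not expect a genuine obstacle. The only subtle point is that the closedness of $\pi$ cannot be dispensed with: for $M:=\{(x,y)\in\R^2:xy=1\}\cup\{(0,0)\}$, $N:=\R$ and $\pi(x,y):=y$, the map $\pi$ is a semialgebraic bijection with $Y=X=\varnothing$, yet the function $(x,y)\mapsto x$ lies in ${\mathcal S}(M)$ and is not integral over $\varphi_{\pi}({\mathcal S}(N))$ (it is unbounded as $y\to 0^+$ along the branch $xy=1$, $x>0$), so $\varphi_{\pi}$ is not integral. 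Finally, the implications (2)--(5)$\Rightarrow$(1) can also be obtained directly, without Proposition \ref{ci}: by Corollary \ref{ff} the fibers of $\pi$ are finite, and since $Y$ is finite, $X=\bigcup_{y\in Y}\pi^{-1}(y)$ is a finite union of finite sets, hence finite.
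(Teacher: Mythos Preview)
Your approach is exactly the paper's: reduce to $\widetilde\varphi_{\pi}$ via Proposition \ref{ci}, then invoke Corollary \ref{nfin}. The details you supply (injectivity of $\pi|_{M\setminus X}$, surjectivity of $\pi|_X$) are correct and fill in what the paper leaves implicit.

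More importantly, you have put your finger on a genuine gap that the paper's own proof glosses over. Proposition \ref{ci} requires $\pi$ to be \emph{closed}, but this hypothesis is absent from the statement of Corollary \ref{mfin}; the paper simply writes ``It follows from Proposition \ref{ci}'' without addressing closedness. Your counterexample---$M=\{(x,y)\in\R^2:xy=1\}\cup\{(0,0)\}$, $N=\R$, $\pi(x,y)=y$---is valid: $\pi$ is a surjective semialgebraic bijection (so $Y=X=\varnothing$ and (1) holds trivially), yet the coordinate function $(x,y)\mapsto x$ is not integral over $\varphi_{\pi}({\mathcal S}(N))$, so (3) fails. Thus the implication $(1)\Rightarrow(3)$ is false as stated, and the corollary needs the additional hypothesis that $\pi$ is closed. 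Your closing observation that $(2)$--$(5)\Rightarrow(1)$ follows directly from Corollary \ref{ff} (finite fibers, hence $X=\bigcup_{y\in Y}\pi^{-1}(y)$ finite) without any closedness assumption is also correct and isolates precisely which direction breaks.
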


\begin{proof} Notice that $Y$ being finite it is a closed semialgebraic subset of $N$ and, by its definition, the restriction
$$
\pi|_{M\setminus X}:M\setminus X\to N\setminus Y
$$
is injective. It follows from Proposition \ref{ci} that the ring homomorphism $\varphi_{\pi}$ is finite (resp. integral, simple or finitely generated) if and only if the ring homomorphism 
$$
{\widetilde\varphi}_{\pi}:{\mathcal S}(Y)\to{\mathcal S}(X),\, g\mapsto g\circ(\pi|_X)
$$ 
is finite (resp. integral, simple or finitely generated). Each one of these conditions is equivalent, by Corollary \ref{nfin}, to the finiteness of $X$, as wanted.
\end{proof}

Next we apply the results above to semialgebraic compactifications of locally compact semialgebraic sets.

\begin{defns} \em (1) A \em semialgebraic compactification \em of a semialgebraic set $M$ is a pair $(X,{\tt j})$ where $X$ is a compact semialgebraic set and ${\tt j}:M\to X$ is a semialgebraic homeomorphism onto its image ${\tt j}(M)$, which is a dense subset of $X$. We denote $\partial X:=X\setminus {\tt j}(M)$ the \em remainder \em of the compactification $(X,{\tt j})$ of $M$.

\vspace{1mm}

\noindent (2) Given two semialgebraic compactifications $(X_1,{\tt j}_1)$ and $(X_2,{\tt j}_2)$ of a semialgebraic set $M$, we say that $(X_2,{\tt j}_2)$ \em dominates \em $(X_1,{\tt j}_1)$, and we write $(X_1,{\tt j}_1)\preccurlyeq(X_2,{\tt j}_2)$, if there exists a semialgebraic map $\pi:X_2\to X_1$ such that $\pi\circ {\tt j}_2={\tt j}_1$. Notice that $\pi$ is surjective since its image is a compact subset of $X_1$ that contains its dense subset $\pi({\tt j}_2(M))={\tt j}_1(M)$. Moreover, the equality $\pi\circ{\tt j}_2={\tt j}_1$ determines $\pi$ because for every point $p\in X_2$ there exists a sequence $\{x_n\}\subset M$ such that $p=\lim_{n\to\infty}\{{\tt j}_2(x_n)\}$, and  
$$
\pi(p)=\lim_{n\to\infty}\{\pi({\tt j}_2(x_n))\}=\lim_{n\to\infty}\{{\tt j}_1(x_n)\}.
$$
Then ${\mathcal S}(X_2)$ is an ${\mathcal S}(X_1)$-algebra with the structure endowed by the homomorphism
$$
\varphi_{\pi}:{\mathcal S}(X_1)\to{\mathcal S}(X_2),\, f\mapsto f\circ\pi.
$$
As $\pi^{-1}(\partial X_1)=\partial X_2$ by \cite[4.3]{fg2}, $\pi|_{\partial X_2}:\partial X_2\to\partial X_1$ is a surjective semialgebraic mapping. Therefore, ${\mathcal S}(\partial X_2)$ is a ${\mathcal S}(\partial X_1)$-algebra with the structure endowed by the homomorphism
$$
\partial\varphi_{\pi}:{\mathcal S}(\partial X_1)\to{\mathcal S}(\partial X_2),\, f\mapsto f\circ(\pi|_{\partial X_2}).
$$
\end{defns}

\begin{cor} \label{borde} Let $(X_1,{\tt j}_1)$ and $(X_2,{\tt j}_2)$ be two semialgebraic compactifications of a non compact semialgebraic set $M$ such that $(X_2,{\tt j}_2)$ dominates  $(X_1,{\tt j}_1)$ and $\partial X_1$ is a closed subset of $X_1$. Then, ${\mathcal S}(X_2)$ is a finite \em (\em resp. integral, simple or finitely generated\,\em) \em ${\mathcal S}(X_1)$-algebra if and only if ${\mathcal S}(\partial X_2)$ is a finite \em (\em resp. integral, simple or finitely generated\,\em) ${\mathcal S}(\partial X_1)$-\em algebra. 
\end{cor}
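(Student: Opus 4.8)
The plan is to reduce the statement to Proposition \ref{ci} applied to the dominating semialgebraic map $\pi\colon X_2\to X_1$. Recall from the discussion preceding the corollary that $\pi$ is the unique semialgebraic map with $\pi\circ{\tt j}_2={\tt j}_1$, and that it is surjective. The first step is to observe that $\pi$ is a closed map: it is a continuous map from the compact semialgebraic set $X_2$ to the (Hausdorff, indeed compact) semialgebraic set $X_1$, hence it sends closed sets to closed sets. So $\pi$ is a closed, surjective, semialgebraic map, which is exactly the opening hypothesis of Proposition \ref{ci}.

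Next I would put $Y:=\partial X_1$ and $X:=\pi^{-1}(Y)$. By hypothesis $\partial X_1$ is a closed (and, as $\partial X_1=X_1\setminus{\tt j}_1(M)$ with ${\tt j}_1(M)$ semialgebraic, automatically semialgebraic) subset of $X_1$, and by \cite[4.3]{fg2} one has $X=\pi^{-1}(\partial X_1)=\partial X_2$. The only remaining hypothesis of Proposition \ref{ci} to check is the injectivity of $\pi|_{X_2\setminus X}\colon X_2\setminus X\to X_1\setminus Y$. Here $X_2\setminus X=X_2\setminus\partial X_2={\tt j}_2(M)$ and $X_1\setminus Y=X_1\setminus\partial X_1={\tt j}_1(M)$, and the identity $\pi\circ{\tt j}_2={\tt j}_1$ shows that $\pi|_{{\tt j}_2(M)}={\tt j}_1\circ{\tt j}_2^{-1}$ is a bijection of ${\tt j}_2(M)$ onto ${\tt j}_1(M)$, in particular injective; this uses that both ${\tt j}_1$ and ${\tt j}_2$ are semialgebraic homeomorphisms onto their images.

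With all the hypotheses of Proposition \ref{ci} in place for the data $(X_2,X_1,\partial X_1,\partial X_2)$ in place of $(M,N,Y,X)$, that proposition gives that $\varphi_{\pi}\colon{\mathcal S}(X_1)\to{\mathcal S}(X_2)$ is finite (resp. integral, simple, finitely generated) if and only if the homomorphism ${\mathcal S}(\partial X_1)\to{\mathcal S}(\partial X_2)$, $g\mapsto g\circ(\pi|_{\partial X_2})$, is finite (resp. integral, simple, finitely generated). Since this latter homomorphism is precisely $\partial\varphi_{\pi}$ as defined just before the corollary, the claimed equivalence follows. I do not anticipate a real obstacle; the only points that need a little care are invoking $\pi^{-1}(\partial X_1)=\partial X_2$ from \cite[4.3]{fg2} and the verification that $\pi$ restricts to a bijection between the two copies of $M$, both of which are immediate from the definition of domination and of the remainder.
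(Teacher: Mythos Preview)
Your proof is correct and follows essentially the same route as the paper: both reduce directly to Proposition \ref{ci} by taking $Y=\partial X_1$, using \cite[4.3]{fg2} to identify $\pi^{-1}(\partial X_1)=\partial X_2$, and checking that $\pi|_{{\tt j}_2(M)}={\tt j}_1\circ{\tt j}_2^{-1}$ is injective. You even make explicit the closedness of $\pi$ (compact source, Hausdorff target), which the paper leaves implicit.
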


\begin{proof} Let $\pi:X_2\to X_1$ be the unique semialgebraic map satisfying $\pi\circ{\tt j}_2={\tt j}_1$. As we have just remarked, $\pi^{-1}(\partial X_1)=\partial X_2$. In addition, the restriction
$$
\pi|_{X_2\setminus\pi^{-1}(\partial X_1)}:X_2\setminus\pi^{-1}(\partial X_1)={\tt j}_2(M)\to X_1\setminus\partial X_1={\tt j}_1(M)
$$
is injective because $\pi|_{X_2\setminus\pi^{-1}(\partial X_1)}={\tt j}_1\circ{\tt j}_2^{-1}$ and both ${\tt j}_1$ and ${\tt j}_2$ are injective mappings. Hence the result follows from Proposition \ref{ci}
\end{proof}

\begin{remark} \label{extra0}\em If $M$ is locally compact then the residue $\partial X_1$ is, by \cite[4.14]{fg2}, a closed semialgebraic subset of $X_1$. Thus, the last Corollary \ref{borde} holds true in this case.
\end{remark}

\begin{cor} Let $(X_1,{\tt j}_1)$ and $(X_2,{\tt j}_2)$ be two semialgebraic compactifications of a semialgebraic set $M$ such that $(X_2,{\tt j}_2)$ dominates $(X_1,{\tt j}_1)$. 

\noindent \em (1) \em If ${\mathcal S}(X_2)$ is a finite \em (\em resp. simple, finitely generated\,\em) \em ${\mathcal S}(X_1)$-algebra, then the fibers of the unique semialgebraic map $\pi:X_2\to X_1$ such that $\pi\circ{\tt j}_2={\tt j}_1$ are finite.

\noindent \em (2) \em Suppose that the union of the fibers of $\pi$ with more than a point is a finite set. Then ${\mathcal S}(X_2)$ is a finite \em (\em resp. simple, finitely generated\,\em) \em ${\mathcal S}(X_1)$-\em algebra.
\end{cor}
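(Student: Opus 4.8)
The plan is to deduce both assertions from the previously established Corollaries \ref{ff} and \ref{mfin}, applied to the unique semialgebraic map $\pi:X_2\to X_1$ satisfying $\pi\circ{\tt j}_2={\tt j}_1$. First I would recall, as in the discussion preceding the statement, that such a $\pi$ exists, is unique and is surjective, and that, since $X_2$ is compact and $X_1$ is Hausdorff, $\pi$ is moreover a closed map. Hence $\pi$ meets the hypotheses of Corollaries \ref{ff} and \ref{mfin} with $M:=X_2$ and $N:=X_1$, and $\varphi_{\pi}:{\mathcal S}(X_1)\to{\mathcal S}(X_2)$ is precisely the homomorphism endowing ${\mathcal S}(X_2)$ with its ${\mathcal S}(X_1)$-algebra structure.

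Part (1) then follows immediately: if ${\mathcal S}(X_2)$ is a finite (resp. simple, finitely generated) ${\mathcal S}(X_1)$-algebra, then $\varphi_{\pi}$ is a finite (resp. simple, finitely generated) homomorphism, and each of these cases is covered by Corollary \ref{ff}, which yields that the fibers of $\pi$ are finite. For part (2), I would first identify the set appearing in the hypothesis: the union of the fibers of $\pi$ of cardinality greater than one equals $X:=\pi^{-1}(Y)$, where $Y:=\{y\in X_1:\cd(\pi^{-1}(y))>1\}$, since $X$ is by definition the union of all fibers of cardinality $>1$. By assumption $X$ is finite, and as $Y\subseteq\pi(X)$ this forces $Y$ to be finite as well, so the hypotheses of Corollary \ref{mfin} are satisfied. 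That corollary asserts the equivalence of ``$X$ is finite'' with the finiteness (resp. integrality, simplicity, finite generation) of $\varphi_{\pi}$; since we are in the situation where $X$ is finite, all of these properties hold, that is, ${\mathcal S}(X_2)$ is a finite, simple and finitely generated ${\mathcal S}(X_1)$-algebra.

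I do not anticipate a genuine obstacle, since the whole content of the statement is already packaged into Corollaries \ref{ff} and \ref{mfin}; the only points requiring care are the verification that the comparison map $\pi$ between the two compactifications is surjective and closed (so that the cited corollaries apply verbatim), and the elementary bookkeeping identifying the ``union of the fibers of $\pi$ with more than one point'' with $\pi^{-1}(Y)$, together with the observation that finiteness of this union forces $Y$ itself to be finite.
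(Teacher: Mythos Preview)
Your proposal is correct and follows essentially the same route as the paper: part (1) is an instance of Corollary \ref{ff}, and for part (2) you identify $\pi^{-1}(Y)$ with the union of nontrivial fibers, deduce that $Y$ is finite, and invoke Corollary \ref{mfin}. Your extra remark that $\pi$ is closed is harmless (and in fact underlies the proof of Corollary \ref{mfin} via Proposition \ref{ci}), though the statements of the cited corollaries do not require it explicitly.
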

\begin{proof} (1) This is a particular case of Corollary \ref{ff}.

\noindent (2) Denote $Y:=\{y\in X_1:\cd(\pi^{-1}(y))>1\}$. By hypothesis $\pi^{-1}(Y)$ is finite and, since $\pi$ is surjective, $Y$ is finite too. Then, the conclusion follows at once from Corollary \ref{mfin}. 
\end{proof}

\begin{cor} Let $(X_1,{\tt j}_1)$ and $(X_2,{\tt j}_2)$ be two semialgebraic compactifications of a semialgebraic set $M$ such that $(X_2,{\tt j}_2)$ dominates $(X_1,{\tt j}_1)$ and the remainder $\partial X_1$ is a finite set. Then, the following conditions are equivalent:

\noindent\em (1) \em The remainder $\partial X_2$ is finite.

\vspace{1mm}

\noindent\em (2) \em ${\mathcal S}(X_2)$ is a finite ${\mathcal S}(X_1)$-algebra.

\vspace{1mm}

\noindent\em (3) \em ${\mathcal S}(X_2)$ is integral over ${\mathcal S}(X_1)$.

\vspace{1mm}

\noindent\em (4) \em ${\mathcal S}(X_2)$ is a finitely generated ${\mathcal S}(X_1)$-algebra.

\vspace{1mm}

\noindent \em (5) \em ${\mathcal S}(X_2)$ is a simple ${\mathcal S}(X_1)$-algebra.
\end{cor}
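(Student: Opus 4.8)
The plan is to reduce the five conditions to the corresponding finiteness statements for the remainders, exactly along the lines of Corollary \ref{borde}, and then to exploit that $\partial X_1$ is finite via Corollary \ref{nfin}. First I would let $\pi\colon X_2\to X_1$ be the unique semialgebraic map with $\pi\circ{\tt j}_2={\tt j}_1$; it is surjective and, being a continuous map between compact semialgebraic sets, it is closed. Since $\partial X_1$ is a finite set, it is a closed semialgebraic subset of $X_1$, and by \cite[4.3]{fg2} we have $\pi^{-1}(\partial X_1)=\partial X_2$. Moreover, on $X_2\setminus\partial X_2={\tt j}_2(M)$ the map $\pi$ coincides with ${\tt j}_1\circ{\tt j}_2^{-1}$, so it restricts to an injective (in fact bijective) map $X_2\setminus\partial X_2\to X_1\setminus\partial X_1$, just as in the proof of Corollary \ref{borde}.

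Next I would apply Proposition \ref{ci} with the triple $(M,N,Y)$ there replaced by $(X_2,X_1,\partial X_1)$, so that $X:=\pi^{-1}(\partial X_1)=\partial X_2$. The hypotheses have just been verified, hence $\varphi_{\pi}\colon{\mathcal S}(X_1)\to{\mathcal S}(X_2)$ is finite (resp. integral, simple, finitely generated) if and only if
$$
\partial\varphi_{\pi}\colon{\mathcal S}(\partial X_1)\to{\mathcal S}(\partial X_2),\ f\mapsto f\circ(\pi|_{\partial X_2})
$$
is finite (resp. integral, simple, finitely generated). This identifies conditions (2), (3), (4) and (5) with the assertions that $\partial\varphi_{\pi}$ is, respectively, finite, integral, finitely generated and simple.

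Finally, $\pi|_{\partial X_2}\colon\partial X_2\to\partial X_1$ is a surjective semialgebraic map onto the finite set $\partial X_1$, so Corollary \ref{nfin} applies to it and shows that the finiteness of $\partial X_2$ is equivalent to each of ``$\partial\varphi_{\pi}$ is finite'', ``$\partial\varphi_{\pi}$ is integral'', ``$\partial\varphi_{\pi}$ is simple'' and ``$\partial\varphi_{\pi}$ is finitely generated''. Combining this with the previous paragraph gives $(1)\Leftrightarrow(2)\Leftrightarrow(3)\Leftrightarrow(4)\Leftrightarrow(5)$. If $M$ happens to be compact then ${\tt j}_1(M)$ is compact and dense in $X_1$, so $\partial X_1=\varnothing$, hence also $\partial X_2=\varnothing$ and $\pi$ is a semialgebraic homeomorphism; then all five conditions hold trivially, and the argument above still goes through with $Y=\varnothing$.

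There is no serious obstacle: the mathematical content is entirely supplied by Proposition \ref{ci} and Corollary \ref{nfin}, and the proof is a matter of checking that their hypotheses are met. The only points that require a word of justification are the identity $\pi^{-1}(\partial X_1)=\partial X_2$ (which is \cite[4.3]{fg2}, already invoked in the discussion preceding Corollary \ref{borde}) and the injectivity of $\pi$ off the remainders (immediate from $\pi\circ{\tt j}_2={\tt j}_1$ together with the injectivity of ${\tt j}_1$ and ${\tt j}_2$).
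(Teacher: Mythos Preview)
Your proof is correct and follows essentially the same route as the paper: reduce to the remainder map via Proposition~\ref{ci} and then apply Corollary~\ref{nfin} using that $\partial X_1$ is finite. The only cosmetic difference is that the paper cites Corollary~\ref{borde} (itself a direct consequence of Proposition~\ref{ci}) rather than invoking Proposition~\ref{ci} directly, and does not separately discuss the trivial case where $M$ is compact.
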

\begin{proof} Let $\pi:X_2\to X_1$ be the unique semialgebraic map such that $\pi\circ{\tt j}_2={\tt j}_1$. By \cite[4.3]{fg2}, $\pi^{-1}(\partial X_1)=\partial X_2$, so $\pi|_{\partial X_2}:\partial X_2\to\partial X_1$ is a surjective semialgebraic mapping. Since $\partial X_1$ is finite, and ${\mathcal S}(\partial X_2)$ is a ${\mathcal S}(\partial X_1)$-algebra with the structure endowed by the homomorphism
$$
\partial\varphi_{\pi}:{\mathcal S}(\partial X_1)\to{\mathcal S}(\partial X_2),\, f\mapsto f\circ(\pi|_{\partial X_2}),
$$
the finiteness of $\partial X_2$ is equivalent, by Corollary \ref {nfin} to any, and so all, the following conditions:

\vspace{1mm}

\noindent i) The homomorphism $\partial\varphi_{\pi}$ is finite; ii) $\partial\varphi_{\pi}$ is integral; iii) $\partial\varphi_{\pi}$ is simple and iv) $\partial\varphi_{\pi}$ is finitely generated. To finish it suffices to apply Corollary \ref{borde} because $\partial X_1$ being finite it is a closed subset of $X_1$.
\end{proof}

\section{Locally injective semialgebraic mappings}\label{s4}

\begin{prop} \label{integral} Let $\pi:M\to N$ be a semialgebraic map between semialgebraic subsets $M$ and $N$ such that the ring homomorphism $\varphi_{\pi}:{\mathcal S}(N)\to{\mathcal S}(M$ is integral. Then 

\noindent \em (1) \em The spectral map
$$
\Specs(\pi):\Specs(M)\to\Specs(N)
$$
is proper and separated \em(\em that is, its fibers are Hausdorff spaces\em)\em. 

\noindent \em (2) \em $\Specs(M)$ maps maximal ideals of ${\mathcal S}(M)$ into maximal ideals of ${\mathcal S}(N)$. Thus the restriction 
$$
\betas\pi:=\Specs(\pi)|_{\betas M}:\betas M\to\betas N
$$
is a well defined proper map and it is unique making commutative the square
\begin{equation*}
\xymatrix{
M\,\ar[d]_\pi\ar@{^{(}->}[r]^{{\hspace{3mm}\tt j}_M\quad}&\betas M\ar[d]_{\betas\pi}\\
N\,\ar@{^{(}->}[r]^{{\hspace{3mm}\tt j}_N\quad}&\betas N
}
\end{equation*}
\em (3) \em The map $\pi$ is proper with finite fibers. 
\end{prop}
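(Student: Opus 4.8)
Here is how I would prove Proposition~\ref{integral}.

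The plan is to read parts (1) and (2) off the Cohen--Seidenberg theory of integral ring homomorphisms, and then to deduce part (3) by transporting the spectral information down to $\pi$ itself by means of the semialgebraic Stone--\v{C}ech compactification and the commutative square of \ref{spcmpp}.

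For (1), I would use that $\varphi_\pi$ induces an \emph{injective} integral homomorphism ${\mathcal S}(N)/\ker\varphi_\pi\hookrightarrow{\mathcal S}(M)$. By lying-over the image under $\Specs(\pi)$ of a closed set $V(\gtb)\subset\Specs(M)$ is precisely $V(\varphi_\pi^{-1}(\gtb))$, so $\Specs(\pi)$ is a closed map; since the prime spectrum of any ring is quasi-compact, all its fibers are quasi-compact, and this is what properness means for a spectral map. For the separatedness I would fix $\gtq\in\Specs(N)$ and identify $\Specs(\pi)^{-1}(\gtq)$ with the prime spectrum of ${\mathcal S}(M)\otimes_{{\mathcal S}(N)}\kappa(\gtq)$, a ring that is integral over the residue field $\kappa(\gtq)$ and hence zero-dimensional; as the prime spectrum of a zero-dimensional ring is Hausdorff (modulo its nilradical it is von Neumann regular and so carries a basis of clopen subsets), every fiber of $\Specs(\pi)$ is Hausdorff.

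For (2), let $\gtm$ be a maximal ideal of ${\mathcal S}(M)$ and put $\gtq:=\varphi_\pi^{-1}(\gtm)$. Then $\varphi_\pi$ induces an integral injection of the domain ${\mathcal S}(N)/\gtq$ into the field ${\mathcal S}(M)/\gtm$; since a subring of a field over which that field is integral is itself a field, ${\mathcal S}(N)/\gtq$ is a field, i.e. $\gtq\in\betas N$. Thus $\Specs(\pi)$ restricts to $\betas\pi:\betas M\to\betas N$, which is continuous (a restriction of $\Specs(\pi)$) and, being a continuous map between compact Hausdorff spaces, is closed with compact fibers, hence proper. Commutativity of the square amounts to the computation $\betas\pi(\gtm_x)=\varphi_\pi^{-1}(\gtm_x)=\gtm_{\pi(x)}$ for $x\in M$, and uniqueness follows, as in \ref{spcmpp}, from the density of ${\tt j}_M(M)$ in $\betas M$ together with the fact that $\betas N$ is Hausdorff.

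The substance of the argument is (3). Finiteness of the fibers of $\pi$ is immediate from Corollary \ref{ff}. The step I expect to be the main obstacle is the equality $\betas\pi^{-1}({\tt j}_N(N))={\tt j}_M(M)$, equivalently $\betas\pi(\partial M)\subset\partial N$. The inclusion $\supseteq$ is just commutativity of the square. For $\subseteq$, suppose $\gtm\in\betas M$ satisfies $\betas\pi(\gtm)=\gtm_y$ for some $y\in N$; take $f:=\dist(\cdot,\{y\})\in{\mathcal S}(N)$, so that $f\in\gtm_y$ and hence $g:=f\circ\pi=\varphi_\pi(f)\in\gtm$, while $\Zz_M(g)=\pi^{-1}(y)$ is finite by Corollary \ref{ff}. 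It then remains to show that a semialgebraic function with finite zero set lies only in fixed maximal ideals: if a free maximal ideal $\gtm'$ contained $g$, then freeness of $\gtm'$ would provide, for each of the finitely many $x_i\in\Zz_M(g)$, a function $h_i\in\gtm'$ with $h_i(x_i)\ne0$; then $g^2+\sum_i h_i^2\in\gtm'$ has empty zero set, hence is a unit, contradicting $\gtm'\ne{\mathcal S}(M)$. Therefore $\gtm=\gtm_x$ for some $x\in\pi^{-1}(y)$, which proves the equality. With it in hand, properness of $\pi$ follows easily: for a compact semialgebraic $K\subset N$ the set ${\tt j}_N(K)$ is closed in the Hausdorff space $\betas N$, so $\betas\pi^{-1}({\tt j}_N(K))$ is a closed, hence compact, subset of $\betas M$ contained in ${\tt j}_M(M)$; it equals ${\tt j}_M(\pi^{-1}(K))$, so $\pi^{-1}(K)$ is compact because ${\tt j}_M$ is a homeomorphism onto its image.
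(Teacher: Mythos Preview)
Your proposal is correct. It diverges from the paper's proof in two places, and in both your argument is more elementary.

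For the separatedness in (1), the paper does not pass to the fiber ring ${\mathcal S}(M)\otimes_{{\mathcal S}(N)}\kappa(\gtq)$. Instead it uses the bijection between $\Specs(M)$ and the real spectrum $\Spec_r({\mathcal S}(M))$ recalled in \ref{rspc}: given incomparable primes $\gtp_1,\gtp_2$ in a fiber it lifts them to prime cones $\alpha_1,\alpha_2$, picks $f_i\in\alpha_i\setminus\alpha_j$, and from $h:=f_1-f_2$ builds $f:=h+|h|$ and $g:=h-|h|$ with $fg=0$, $f\notin\gtp_1$, $g\notin\gtp_2$. Your zero-dimensional-fiber argument uses only Cohen--Seidenberg and the fact that a zero-dimensional reduced ring is von Neumann regular; it would work for any integral ring map, not just between real closed rings. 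The paper's route, on the other hand, produces explicit separating basic opens in $\Specs(M)$.

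For the key equality $(\betas\pi)^{-1}({\tt j}_N(N))={\tt j}_M(M)$ in (3), the paper argues via residue fields: since $\varphi_\pi$ is integral, ${\mathcal S}(M)/\gtm$ is an algebraic extension of ${\mathcal S}(N)/\gtm_y=\R$, and because ${\mathcal S}(M)$ is a real closed ring the quotient is a real closed field, hence equals $\R$; it then invokes an external result characterizing the maximal ideals with residue field $\R$ as the fixed ones. Your argument bypasses all of this by first using Corollary \ref{ff} to get that $g:=\dist(\cdot,y)\circ\pi\in\gtm$ has finite zero set and then running the sum-of-squares trick to rule out free maximal ideals. This is self-contained and avoids both the real closed ring machinery and the citation; conversely, the paper's route does not need the finiteness of $\pi^{-1}(y)$ for this step. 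One small point: to conclude properness of $\pi$ from your last paragraph you should note that an arbitrary compact $K\subset N$ sits inside a compact \emph{semialgebraic} set (intersect $N$ with a closed ball), so checking compact semialgebraic preimages suffices.
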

\begin{proof} (1) Since $\varphi_{\pi}$ is integral the spectral map $\Specs(\pi)$ is closed, by \cite[Thm. 5.10]{am}. In addition, the fibers of $\Specs(\pi)$ are compact because they are closed subsets of the compact space $\Specs(M)$. Thus $\Specs(\pi)$ is a proper map. 


Let us see now that for every prime ideal $\gtq$ in ${\mathcal S}(N)$ the fiber $X:=\Specs(\pi)^{-1}(\gtq)$ is a Hausdorff space. Let $\gtp_1,\gtp_2$ be two distinct points in $X$. By \cite[5.9]{am}, $\gtp_1\not\subset\gtp_2$ and $\gtp_2\not\subset\gtp_1$.  Let $\alpha_1, \alpha_2\in\Spec_r({\mathcal S}(M))$ such that $\gtp(\alpha_i)=\gtp_i$ for $i=1,2$, and let $f_1\in\alpha_1\setminus\alpha_2$ and $f_2\in\alpha_2\setminus\alpha_1$. Thus 
$$
f_1(\alpha_1)\geq0,\quad f_1(\alpha_2)<0,\quad f_2(\alpha_2)\geq0\quad\&\quad f_2(\alpha_1)<0,
$$
which implies that $h:=f_1-f_2$ satisfies $h(\alpha_1)>0$ and $h(\alpha_2)<0$. The functions $f:=h+|h|$ and $g:=h-|h|$ satisfy $f(\alpha_1)>0$ and $g(\alpha_2)<0$, so $f\in{\mathcal S}(M)\setminus\gtp_1$, $g\in{\mathcal S}(M)\setminus\gtp_2$ and $fg=h^2-|h|^2=0$. Consequently, the open disjoints subsets
\begin{equation*}
\begin{split}
\Dd_{\Specs(M)}(f)&=\{\gtp\in\Specs(M):\, f\notin\gtp\}\quad\&\\
&\quad\Dd_{\Specs(M)}(g)=\{\gtp\in\Specs(M):\, g\notin\gtp\}
\end{split}
\end{equation*}
are, respectively, open neighbourhoods of $\gtp_1$ and $\gtp_2$ in $\Specs(M)$.

\noindent (2)  Let $(\betas M,{\tt j}_M)$ and $(\betas N,{\tt j}_N)$ be, respectively, the semialgebraic Stone--\v{C}ech compactifications of $M$ and $N$. The closed map $\Specs(\pi)$ transforms the closed points of $\Specs(M)$ into closed points of $\Specs(N)$. Thus, if ${\tt r}_N:\Specs(N)\to\betas N$ denotes the retraction that maps each prime ideal in ${\mathcal S}(N)$ to the unique maximal ideal containing it, we have
$$
\betas\pi={\tt r}_N\circ\Specs(\pi)|_{\betas M}=\Specs(\pi)|_{\betas M}.
$$ 
By \cite[4.4]{fg2} $\betas M$ and $\betas N$ are compact and Hausdorff spaces, so $\betas\pi$ is a proper map. Its uniqueness making commutative the square in the statement follows at once from (\ref{spcmpp}).

\noindent (3)  Let us prove that $(\betas\pi)^{-1}({\tt j}_N(N))={\tt j}_M(M)$. The inclusion ${\tt j}_M(M)\subset(\betas\pi)^{-1}({\tt j}_N(N))$ is an straightforward consequence of the commutativity of the square above. Conversely, given $\gtm\in(\betas\pi)^{-1}({\tt j}_N(N))$ the maximal ideal $\gtn:=\betas\pi(\gtm)\in{\tt j}_N(N)$, i.e., there exists a point $y\in N$ such that $\gtn=\gtm_y$. Thus ${\mathcal S}(N)/\gtn=\R$ and, since $\gtn=\varphi^{-1}(\gtm)$, we have an  algebraic field extension
$$
\R={\mathcal S}(N)/\gtn\hookrightarrow{\mathcal S}(M)/\gtm,
$$
because $\varphi_{\pi}$ is an integral homomorphism. By \cite[\S III.1]{s1} or \cite[Thm. 5.12]{s2} ${\mathcal S}(M)$ is a real closed ring, so the quotient ${\mathcal S}(M)/\gtm$ is a real closed field and the equality ${\mathcal S}(M)/\gtm=\R$ holds. This implies, by \cite[3.11]{fg2}, that there exists a point $x\in M$ such that $\gtm=\gtm_x={\tt j}_M(x)\in{\tt j}_M(M)$. Therefore,
$$
\betas\pi|_{{\tt j}_M(M)}:{\tt j}_M(M)\to{\tt j}_N(N)
$$
is a proper map too, and the same holds for
$$
\pi={\tt j}_N^{-1}\circ\betas\pi|_{{\tt j}_M(M)}\circ{\tt j}_M:M\to N.
$$
\noindent Finally, the finiteness of the fibers of $\pi$ follows at once from Corollary \ref{ff}.
\end{proof}

\begin{remark}\label{hardt} \em For every semialgebraic map $\pi:M\to N$ whose fibers are finite, as in Proposition \ref{integral}, there exists an integer $k>0$ such that the fibers of $\pi$ have, at most, $k$ elements. Indeed, by Hardt's trivialization theorem, see \cite[9.3.2]{bcr}, there exist finitely many subsets $T_1,\dots,T_{r}$ of $N$, semialgebraic sets $P_1,\dots, P_r$ and semialgebraic homeomorphisms $\theta_{\ell}:T_{\ell}\times P_{\ell}\to \pi^{-1}(T_{\ell})$
such that for $1\leq\ell\leq r$ we have
$$
\xymatrix{
T_{\ell}\times P_{\ell}\ar@{->}[r]^{\quad\theta_{\ell}\quad}\ar@{->}[rd]^{\pi_1\quad}&\pi^{-1}(T_{\ell})\ar@{->}[d]^{\pi\quad}\\
&T_{\ell}}
$$
where $\pi_1:T_{\ell}\times P_{\ell}\to T_{\ell}$ is the first projection. Hence, each set $P_{\ell}$ is finite and 
$$
\cd\,(\pi^{-1}(y))=\cd\,(P_{\ell})
$$ 
for every $y\in T_{\ell}$ and $1\leq\ell\leq r$. Thus, $k:=\max\,\{\cd\,(P_{\ell}):\, 1\leq \ell\leq r\}$ does the job.
\end{remark}

The next examples show that local injectivity is somehow related to integral extensions.

\begin{examples} \label{lcinycv} \em (1) Let $\pi:M\to N$ be a closed semialgebraic map between semialgebraic subsets $M$ and $N$. Suppose that there exists a finite cover $\{M_i:\, 1\leq i\leq k\}$ of $M$ by closed semialgebraic subsets such that each restriction $\pi|_{M_i}:M_i\to N$ is injective. Then the induced homomorphism $
\varphi_{\pi}:{\mathcal S}(N)\to{\mathcal S}(M)$ is integral. 

Notice that $N_i:=\pi(M_i)$ is a closed semialgebraic subset of $N$, because $\pi$ is a closed semialgebraic map, and $\pi|_{M_i}:M_i\to N_i$ is a closed and continuous semialgebraic bijection. Thu it is a semialgebraic homeomorphism. Consequently, for every $f\in{\mathcal S}(M)$ the function $f_i:=f|_{M_i}\circ(\pi|_{N_i})^{-1}:N_i\to\R$ is semialgebraic and, since $N_i$ is a closed semialgebraic subset of $N$, there exists, by Lemma \ref{tietze} (2), a semialgebraic extension $g_i:N\to\R$ of $f_i$. Now, for each point $x\in M$ there exists an index $1\leq i\leq k$ such that $x\in M_i$; thus $g_i(\pi(x))=f_i(\pi(x))=f(x)$, and henceforth the equality
$$
\prod_{i=1}^k(f-g_i\circ\pi)=0
$$
shows that $f$ is integral over ${\mathcal S}(N)$.

\vspace{1mm}

\noindent (2) Let $M:=(a,b)\subset\R$ be an open interval, where $a\in\R\cup\{-\infty\}$ and $b\in\R\cup\{\infty\}$, and let $\pi:M\to\R$ be a non constant closed Nash function. Let $\pi':M\to\R$ be its derivative, whose zeroset $\Zz_M(\pi')$ is finite. Otherwise it would be an $1$-dimensional semialgebraic set, and so it should contain an open subset $V$ of $M$. By the Identity Principle $\pi'\equiv0$, i.e., $\pi$ would be constant, and this is false. Let $\Zz_M(\pi'):=\{a_1,\dots,a_{\ell}\}$ and consider the closed subsets of $M$
$$
M_0:=(a,a_1],\quad M_1:=[a_1,a_2],\dots, M_{\ell-1}:=[a_{\ell-1},a_{\ell}]\quad\&\quad M_{\ell}:=[a_{\ell},b).
$$
For $0\leq i\leq {\ell}$ the restriction $\pi|_{M_i}$ is monotone, hence injective. Thus, it follows from part (1) that the induced homomorphism $\varphi_{\pi}:{\mathcal S}(\R)\to{\mathcal S}(M)$ is integral. This applies, for instance, to non constant polynomial maps $\pi:\R\to\R$, since they are closed Nash maps.

\vspace{1mm}

\noindent (3) The Nash function $\pi:\R\to\R,\, t\mapsto\sqrt{1+t^2}-t$ is not a closed map because $\pi(\R)=(0,+\infty)$ is not a closed subset of $\R$. Thus, by Proposition \ref{integral}, the induced homomorphism $\varphi_{\pi}$ is not integral.
\end{examples}

Before proving that finite homomorphisms between rings of semialgebraic functions are induced by locally injective semialgebraic maps we formulate Nakayama's Lemma as we will need for our purposes.

\begin{lem}\em (\em {\bf Nakayama}\em)\em\label{nak} Let $(A,\gtm_A)$ and $(B,\gtm_B)$ be local rings and let $\psi:A\to B$ be a finite homomorphism of local rings such that the homomorphism $\ol{\psi}:A/\gtm_A\to B/\gtm_A B$ defined by $\ol{\psi}(a+\gtm_A)=\psi(a)+\gtm_A B$ is surjective. Then, $\psi$ is surjective too.
\end{lem}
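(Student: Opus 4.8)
The plan is to reduce the statement to the module-theoretic form of Nakayama's Lemma over the local ring $A$. First I would observe that, since $\psi$ is finite, $B$ is a finitely generated $A$-module for the $A$-algebra structure $a\cdot b:=\psi(a)b$; say $B=Ab_1+\cdots+Ab_n$. The image $\psi(A)$ is an $A$-submodule of $B$, so the quotient $C:=B/\psi(A)$ is a finitely generated $A$-module, generated by the classes $c_i:=b_i+\psi(A)$ for $1\leq i\leq n$.

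Next I would translate the hypothesis on $\ol{\psi}$. The surjectivity of $\ol{\psi}:A/\gtm_A\to B/\gtm_A B$ means precisely that $B=\psi(A)+\gtm_A B$: every $b\in B$ can be written as $b=\psi(a)+\sum_j m_j b_j'$ with $a\in A$, $m_j\in\gtm_A$ and $b_j'\in B$. Passing to the quotient $C$ this gives $C=\gtm_A C$.

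Then I would invoke the determinant trick (Cayley--Hamilton). Writing each generator as $c_i=\sum_{j=1}^n a_{ij}c_j$ with $a_{ij}\in\gtm_A$, the matrix $\mathrm{I}_n-(a_{ij})$ annihilates the column vector $(c_1,\dots,c_n)^{\mathrm{t}}$; multiplying on the left by its adjugate yields $d\,c_i=0$ for all $i$, where $d:=\det(\mathrm{I}_n-(a_{ij}))=1+e$ with $e\in\gtm_A$. Since $A$ is local, $d\notin\gtm_A$ forces $d$ to be a unit, whence $c_i=0$ for all $i$, so $C=0$. This says $B=\psi(A)$, i.e. $\psi$ is surjective.

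I do not expect a genuine obstacle: this is the classical argument, and the only points needing care are the bookkeeping that makes $C$ finitely generated (which is exactly where finiteness of $\psi$ enters) and reading the surjectivity of $\ol{\psi}$ correctly as $B=\psi(A)+\gtm_A B$. One could equally well argue by induction on the number $n$ of module generators of $B$ instead of using the determinant trick. Note also that the hypotheses that $B$ is local and that $\psi$ is a homomorphism of local rings are not actually used here; what matters is only that $A$ is local, so that $\gtm_A$ is its Jacobson radical.
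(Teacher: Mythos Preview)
Your proof is correct and follows the same route as the paper's: translate the surjectivity of $\ol{\psi}$ into $B=\psi(A)+\gtm_A B$, then use Nakayama over the local ring $A$. The only difference is cosmetic---the paper invokes \cite[2.7]{am} directly for the implication $B=\gtm_A B+\im\psi\Rightarrow B=\im\psi$, while you spell out the determinant trick; your remark that the locality of $B$ is not actually needed is also accurate.
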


\begin{proof} It suffices to prove that $B=\psi(\gtm_A)B+\im\psi$ and apply next \cite[2.7]{am}, because $\gtm_A$ is the Jacobson radical of $A$. Given $b\in B$ there exists $a\in A$ such that 
$$
b+\psi(\gtm_A) B=\psi(a)+\psi(\gtm_A)B,
$$ 
i.e., $b-\psi(a)\in\psi(\gtm_A) B$, as wanted.
\end{proof}

\begin{thm} \label{lciny}  Let $\pi:M\to N$ be a semialgebraic map between semialgebraic subsets $M$ and $N$ such that the ring homomorphism $\varphi_{\pi}:{\mathcal S}(N)\to{\mathcal S}(M)$ is finite. Then the maps 
$$
\Specs(\pi):\Specs(M)\to\Specs(N),\quad\betas\pi:\betas M\to\betas N\quad\&\quad\pi:M\to N
$$
are proper, separated, locally injective and their fibers are finite sets.
\end{thm}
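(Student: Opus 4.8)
The plan is to obtain the \emph{proper}, \emph{separated} and \emph{finite fibre} assertions quickly and to concentrate the work on \emph{local injectivity}. Since a finite homomorphism is integral, Proposition \ref{integral} already gives that $\Specs(\pi)$ is proper and separated, that $\betas\pi$ is well defined and proper (it is also closed with Hausdorff fibres, being a continuous map between the compact Hausdorff spaces $\betas M$, $\betas N$), and that $\pi$ is proper with finite fibres (Corollary \ref{ff}). To see that the fibres of $\Specs(\pi)$ — hence of $\betas\pi=\Specs(\pi)|_{\betas M}$ — are finite, fix $\gtq\in\Specs(N)$ with residue field $\kappa(\gtq):=\qf(\mathcal S(N)/\gtq)$; the fibre $\Specs(\pi)^{-1}(\gtq)$ is homeomorphic to $\Spec(\mathcal S(M)\otimes_{\mathcal S(N)}\kappa(\gtq))$, which, $\varphi_\pi$ being finite, is a finite-dimensional $\kappa(\gtq)$-algebra, thus artinian, thus has only finitely many prime ideals.

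For local injectivity, write $\mathcal S(M)=\mathcal S(N)b_1+\dots+\mathcal S(N)b_n$ and set $\Phi:=(\pi,b_1,\dots,b_n):M\to N\times\R^n$. Since the $b_i$ together with the pullbacks from $\mathcal S(N)$ generate $\mathcal S(M)$, which separates the points of $M$, the map $\Phi$ is injective; it is proper as well ($\Phi^{-1}(K)\subset\pi^{-1}(\operatorname{pr}_N(K))$ and $\pi$ is proper), hence a homeomorphism onto a closed semialgebraic subset of $N\times\R^n$. Thus we may assume $M$ closed in $N\times\R^n$, $\pi=\operatorname{pr}_N|_M$, and $b_i$ the $i$-th $\R^n$-coordinate restricted to $M$. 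Fix $x_0\in M$ and put $y_0:=\pi(x_0)$; the fibre $\pi^{-1}(y_0)$ is a finite subset of $\{y_0\}\times\R^n$. As $\pi$ is closed one separates the finitely many sheets over $y_0$ by an open neighbourhood $W$ of $y_0$: $\pi^{-1}(W)=V_1\sqcup\dots\sqcup V_r$ with each $V_j$ open in $M$, closed in $W\times\R^n$, and $V_j\cap\pi^{-1}(y_0)$ a single point. Replacing $M$ by the closed semialgebraic set $\cl_{N\times\R^n}(V_j)\cap(\cl_N(W)\times\R^n)$ and $N$ by a closed semialgebraic set containing its image, and using Remark \ref{cons} together with the surjectivity of the restriction homomorphisms (Lemma \ref{tietze}), the induced homomorphism stays finite while $\pi^{-1}(y_0)$ becomes a single point. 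So it suffices to prove: \emph{if $\varphi_\pi$ is finite and $\pi^{-1}(y_0)=\{x_0\}$, then $\pi$ is injective on a neighbourhood of $x_0$.}

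In this situation put $A:=\mathcal S(N)_{\gtm_{y_0}}$ and $B:=\mathcal S(M)_{\gtm_{y_0}}$ (localizing $\mathcal S(M)$ at $\varphi_\pi(\mathcal S(N)\setminus\gtm_{y_0})$); then $A$ is local with residue field $\R$, $B$ is finite over $A$, and since $\varphi_\pi$ is integral the unique maximal ideal of $\mathcal S(M)$ over $\gtm_{y_0}$ is $\gtm_{x_0}$ (Proposition \ref{integral}(2)--(3)), so $B$ is local with residue field $\R$. The crux is that the special fibre $B/\gtm_AB=(\mathcal S(M)/\gtm_{y_0}\mathcal S(M))_{\gtm_{x_0}}$ equals $\R$. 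One has $\gtm_{x_0}=\gtm_{y_0}\mathcal S(M)+\sum_i(b_i-b_i(x_0))\mathcal S(M)$ (decompose $f\in\gtm_{x_0}$ as $\sum g_ib_i$, $g_i\in\mathcal S(N)$, and subtract the constants $g_i(y_0)$); and each $b_i$ satisfies a monic relation over $\mathcal S(N)$ whose reduction modulo $\gtm_{y_0}\mathcal S(M)$ is a monic $Q_i\in\R[X]$ with $Q_i(\bar b_i)=0=Q_i(b_i(x_0))$, so $(\bar b_i-b_i(x_0))\widetilde Q_i(\bar b_i)=0$ where $Q_i(X)-Q_i(b_i(x_0))=(X-b_i(x_0))\widetilde Q_i(X)$. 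Since $\mathcal S(M)/\gtm_{y_0}\mathcal S(M)$ is local with residue field $\R$, taking $b_i(x_0)$ to be a simple root of $Q_i$ makes $\widetilde Q_i(\bar b_i)$ a unit, whence $\bar b_i=b_i(x_0)$; therefore $\gtm_{x_0}=\gtm_{y_0}\mathcal S(M)$ and the special fibre is $\R$. Now Nakayama's Lemma \ref{nak} applies to $\psi:A\to B$ and gives that $\psi$ is surjective, so there is $s\in\mathcal S(N)$ with $s(y_0)\neq0$ such that on the open neighbourhood $\{s\circ\pi\neq0\}$ of $\pi^{-1}(y_0)$ each $b_i$ equals a pullback $h_i\circ\pi$; then $\pi(x)=\pi(x')$ there forces $b_i(x)=b_i(x')$ for all $i$, and as $\Phi$ is injective, $x=x'$. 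This proves local injectivity of $\pi$.

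Finally, the same $s$ shows that $\varphi_\pi$ becomes surjective after inverting $\varphi_\pi(s)$, so $\Specs(\pi)$ restricts to a closed immersion $\Dd_{\Specs(M)}(\varphi_\pi(s))\hookrightarrow\Dd_{\Specs(N)}(s)$, in particular it is injective on $\Dd_{\Specs(M)}(\varphi_\pi(s))$; letting $y$ run over $N$ (and treating the free maximal ideals via the analogous argument for $\partial\pi$, using Corollary \ref{borde} when $\partial M$, $\partial N$ are closed, or a density argument otherwise) these basic opens cover $\Specs(M)$, so $\Specs(\pi)$ is locally injective, and restricting to $\betas M$, resp.\ to $M$ through ${\tt j}_M$, gives the same for $\betas\pi$ and $\pi$. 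The main obstacle is the claim that the special fibre is $\R$: this is exactly where finiteness (rather than mere integrality) is indispensable — for $\pi(t)=t^2$ the homomorphism $\varphi_\pi$ is integral but not finite, $\pi$ is not locally injective at $0$, and the localized homomorphism fails to be finite so Nakayama cannot be applied — and making rigorous the choice of integral relations having a simple root at the fibre point, or arguing directly from the real closedness of the residue fields of $\mathcal S(M)$, is the delicate point.
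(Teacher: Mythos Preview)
Your overall strategy matches the paper's: localize at a maximal ideal, show the localized homomorphism $\psi:A\to B$ is surjective via Nakayama's Lemma~\ref{nak}, then spread this out to a basic open on which $\Specs(\pi)$ is a closed immersion. The proper/separated/finite-fibre parts are fine. The gap is exactly where you say it is, and it is genuine. Your argument that $B/\gtm_AB=\R$ rests on ``taking $b_i(x_0)$ to be a simple root of $Q_i$'', but there is no mechanism to arrange this: $Q_i\in\R[X]$ is the reduction of a fixed integral relation, and if $\bar b_i-b_i(x_0)$ is a nonzero nilpotent in the Artinian local $\R$-algebra $B/\gtm_AB$ (precisely what you are trying to exclude), then \emph{every} monic $Q\in\R[X]$ annihilating $\bar b_i$ has $b_i(x_0)$ as a multiple root, so the factorisation $(\bar b_i-b_i(x_0))\widetilde Q_i(\bar b_i)=0$ yields nothing. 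The paper closes this gap with a feature of $\mathcal S(M)$ that has no general commutative-algebra analogue: every element admits an $m$-th root for odd $m$. After replacing the generators by $h_j:=f_j-(g_j\circ\pi)\in\gtm$ (using that $\kappa(\gtn)\to\kappa(\gtm)$ is an isomorphism of real closed fields), one has only $h_j^n\in\gtn B$ for some $n$; a multinomial expansion then gives $h^m\in\gtn B$ for all $h\in\gtm$ and all odd $m>n(r+1)$. Now for arbitrary $f\in\gtm$ write $f=h^m$ with $h\in\mathcal S(M)$; then $h\in\gtm$, so $f=h^m\in\gtn B$, whence $\gtm_AB=\gtm_B$ and Nakayama applies.

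There is a second, structural gap. Your geometric sheet-splitting only treats maximal ideals of the form $\gtm_{x_0}$ with $x_0\in M$; for free maximal ideals $\gtm\in\partial M=\betas M\setminus{\tt j}_M(M)$ you invoke Corollary~\ref{borde} (which needs $\partial M$ closed, i.e.\ $M$ locally compact) or an unspecified density argument. The paper avoids this entirely by running the Nakayama argument directly for \emph{every} $\gtm\in\betas M$, using only that $\kappa(\gtm)$ is real closed and that $\mathcal S(M)$ has odd roots; no embedding of $M$ into $N\times\R^n$ and no fibre-splitting reduction are needed. The resulting basic opens $\Dd_{\Specs(M)}(f)$ then cover all of $\Specs(M)$ (each prime lies in some maximal ideal), and local injectivity of $\Specs(\pi)$, $\betas\pi$ and $\pi$ follow simultaneously.
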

\begin{proof} As we observed in the proof of Proposition \ref{integral}, and since the homomorphism $\varphi_{\pi}$ is integral, the map $\Specs(\pi):\Specs(M)\to\Specs(N)$ is proper and separated. The finiteness of its fibers follows from \cite[Prop. 11]{s3}, and the properness of $\pi$ and $\betas\pi$ has been proved in Proposition \ref{integral}. The fibers of $\pi$ and $\betas\pi$ are subsets of the fibers of $\Specs(\pi)$, so they are finite too. In addition they are  Hausdorff spaces since $M$ and $\betas M$ are so.

To prove that $\Specs(\pi)$, $\betas\pi$ and $\pi$ are locally injective maps it suffices to demonstrate the following:

\paragraph{}\label{INJ} For every $\gtm\in\betas M$ there exists $f\in{\mathcal S}(M)$ such that $\gtm\in\Uu:=\Dd_{\Specs(M)}(f)$ and the restriction $\Specs(\pi)|_{\Uu}:\Uu\to\Specs(N)$ is injective. Once this is proved, every ideal $\gtp\in\Specs(M)$ is contained in a maximal ideal $\gtm$ of ${\mathcal S}(M)$, so $f\notin\gtp$, that is, $\Uu$ is an open neighbourhood of $\gtp$ in $\Specs(M)$, which proves that $\Specs(\pi)$ is locally injective. In addition, also the maps
$$
\betas\pi|_{\Uu\cap\betas M}=\Specs(\pi)|_{\Uu\cap\betas M}:\Uu\cap\betas M\to\betas N\quad\&\quad\pi|_{\Uu\cap M}=\betas\pi|_{\Uu\cap M}:\Uu\cap M\to N 
$$
are injective, so both $\betas\pi$ and $\pi$ are locally injective.

Let us prove \ref{INJ}. Denote $\gtn:=\Specs(M)(\gtm)\in\betas N$, and consider the induced homomorphism of local rings 
$$
\psi:A:={\mathcal S}(N)_{\gtn}\to B:={\mathcal S}(M)_{\gtm},\, \frac{u}{v}\mapsto\frac{\varphi_{\pi}(u)}{\varphi_{\pi}(v)},
$$
which is finite since $\varphi_{\pi}$ is so. We are going to prove that $\psi$ is surjective. Let $\gtm_A:=\gtn A$ and $\gtm_B:=\gtm B$ be, respectively, the unique maximal ideals in the local rings $A$ and $B$. Then $\gtm_A B\subset\gtm_B$, and we will prove right now that $\sqrt{\gtm_A B}=\gtm_B$. To that end it suffices to see that $\gtm_B$ is the unique prime ideal of $B$ that contains $\gtm_AB$. Let $\gtp$ be a prime ideal in $B$ that contains $\gtm_AB$. Then $\gtp\subset\gtm_B$ and, using \cite[5.9]{am}, to prove that $\gtp=\gtm_B$ it is enough to see that both prime ideals lie over $\gtm_A$ via the integral homomorphism $\psi$. But $\gtm_A\subset\psi^{-1}(\psi(\gtm_A)B)\subset\psi^{-1}(\gtp)$ and, since $\gtm_A$ is maximal, we have $\gtm_A=\psi^{-1}(\gtp)$, whereas $\gtm_A=\psi^{-1}(\gtm_B)$ because $\gtn=\varphi_{\pi}^{-1}(\gtm)$. Consequently, 
\begin{equation}\label{keyeq1}
\begin{split}
\gtm=\gtm_B\cap{\mathcal S}(M)=\sqrt{\gtm_AB}\cap{\mathcal S}(M)\subset\sqrt{\gtm_AB\cap{\mathcal S}(M)}=\sqrt{\gtn B\cap{\mathcal S}(M)}.
\end{split}
\end{equation}
This implies $\gtm=\sqrt{\gtn B\cap{\mathcal S}(M)}$ because $\gtm$ is maximal. We will show now that 
\begin{equation}\label{keyeq}
\begin{split}
\gtm=\gtn B\cap{\mathcal S}(M).
\end{split}
\end{equation}
To prove it note that, by the finiteness of $\varphi_{\pi}$, there exist $f_1,\dots,f_r\in{\mathcal S}(M)$ such that
\begin{equation}\label{keyeq0}
\begin{split}
{\mathcal S}(M)=f_1{\mathcal S}(N)+\cdots+f_r{\mathcal S}(N).
\end{split}
\end{equation}
On the other hand the field homomorphism
$$
\phi:\kappa(\gtn):={\mathcal S}(N)/\gtn\to\kappa(\gtm):={\mathcal S}(M)/\gtm,\, g+\gtn\mapsto(g\circ\pi)+\gtm
$$
is finite because $\varphi_{\pi}$ is so. Thus $\kappa(\gtm)$ is an algebraic extension of $\kappa(\gtn)$ and both are real closed fields, which implies that $\phi$ is surjective. Hence, for every $1\leq j\leq r$ there exist $g_j\in{\mathcal S}(N)$ such that $f_j+\gtm=(g_j\circ\pi)+\gtm$, so $h_j:=f_j-(g_j\circ\pi)\in\gtm$. 

By equality \eqref{keyeq1} there exists a positive integer $n\in\Z$ such that $h_j^n\in\gtn B\cap{\mathcal S}(M)$ for $1\leq j\leq r$. On the other hand, for each function $f\in{\mathcal S}(M)$ there exist $\ell_1,\dots,\ell_r\in{\mathcal S}(N)$ such that
$$
f=\sum_{j=1}^rf_j\cdot(\ell_{j}\circ\pi)=\sum_{j=1}^r(h_j+(g_j\circ\pi))\cdot(\ell_{j}\circ\pi)=\sum_{j=1}^rh_j\cdot(\ell_{j}\circ\pi)+\sum_{j=1}^rg_j\ell_j\circ\pi.
$$
In other words,
\begin{equation}\label{keyeq2}
\begin{split}
{\mathcal S}(M)=h_1{\mathcal S}(N)+\cdots+h_r{\mathcal S}(N)+h_{r+1}{\mathcal S}(N),
\end{split}
\end{equation}
where $h_{r+1}={\bf 1}_M$. 

Let $m>n(r+1)$ be an odd positive integer. We claim that $h^m\in\gtn B\cap{\mathcal S}(M)$ for every $h\in\gtm$. To see this observe that there exist $q_1,\dots,q_{r+1}\in{\mathcal S}(N)$ such that 
$$
h=\sum_{j=1}^rh_j\cdot(q_j\circ\pi)+(q_{r+1}\circ\pi),
$$
hence
$$
\varphi_{\pi}(q_{r+1})=q_{r+1}\circ\pi=h-\sum_{j=1}^rh_j\cdot(q_j\circ\pi)\in\gtm,
$$
that is, $q_{r+1}\in\varphi_{\pi}^{-1}(\gtm)=\gtn$. Thus, 
$$
h^{m}=\Big(\sum_{j=1}^rh_j\cdot(q_j\circ\pi)+(q_{r+1}\circ\pi)\Big)^m\in\gtn B\cap{\mathcal S}(M)
$$
because it is a sum whose summands contain either a factor $h_j^{r_j}$ with $r_j\geq n$ or a power of $q_{r+1}\circ\pi$.

We are ready to check equality \eqref{keyeq}, where the inclusion $\gtn B\cap{\mathcal S}(M)\subset\gtm$ is evident. Conversely, let $f\in\gtm$. Since $m$ is odd there exists $h\in{\mathcal S}(M)$ such that $h^m=f\in\gtm$. Thus $h\in\gtm$ and, as we have just proved, this implies that $f=h^m\in \gtn B\cap{\mathcal S}(M)$. Consequently $\gtm_AB=\gtm_B$ because
$$
\gtm_B=\gtm B=(\gtn B\cap{\mathcal S}(M))B\subset\gtn B=\gtm_AB\subset\gtm_B.
$$
Thus, the homomorphism $\psi:A\to B$ is surjective since it satisfies the hypothesis in Nakayama's Lemma \ref{nak}; $\psi$ induces a surjective homomorphism
$$
\phi:=\ol{\psi}:A/\gtm_A=\kappa(\gtn)\to B/\gtm_A B= B/\gtm B=\kappa(\gtm).
$$
In this way we have a commutative square
$$
\xymatrix{
{\mathcal S}(N)\ar[d]\ar@{->}[r]^{\hspace{3mm}\varphi_{\pi}\quad}&{\mathcal S}(M)\ar[d]\\
{\mathcal S}(N)_{\gtn}\ar@{->}[r]^{\hspace{3mm}\psi\quad}&{\mathcal S}(M)_{\gtm}
}
$$
where $\varphi_{\pi}$ is finite and $\psi$ is surjective. Let $f_1,\dots,f_r\in{\mathcal S}(M)$ satisfying equality \eqref{keyeq0}. As $\psi$ is surjective there exist $h_1,\dots,h_r\in{\mathcal S}(N)$ and $g_1,\dots,g_r\in{\mathcal S}(N)\setminus\gtn$ such that, for $1\leq i\leq r$, 
$$
f_i=\frac{h_i\circ\pi}{g_i\circ\pi}.
$$  
Denote $p_i:=h_i\cdot\prod_{j\neq i}g_j\in{\mathcal S}(N)$. Then 
$$
g:=\prod_{i=1}^rg_i\in{\mathcal S}(N)\setminus\gtn\quad\&\quad f_i=\frac{p_i\circ\pi}{g\circ\pi}.
$$
Thus $f:=g\circ\pi\in{\mathcal S}(M)\setminus\gtm$ and we claim that the homomorphism
$$
\widetilde{\varphi}_{\pi}:{\mathcal S}(N)_g\to{\mathcal S}(M)_f,\, \frac{u}{g^n}\mapsto \frac{u\circ\pi}{f^n}
$$
is surjective. Indeed, given $\zeta\in{\mathcal S}(M)_f$ there exist $v\in{\mathcal S}(M)$ and a positive integer $k$ such that $\zeta:=\frac{v}{f^k}$. Write 
$$
v=f_1(u_1\circ\pi)+\cdots+f_r(u_r\circ\pi), 
$$
for some $u_1,\dots,u_r\in{\mathcal S}(N)$. Then
$$
\zeta=\frac{v}{f^k}=\frac{\sum_{i=1}^rf_i(u_i\circ\pi)}{f^k}=\frac{\sum_{i=1}^r(p_i\circ\pi)(u_i\circ\pi)}{(g\circ\pi)^{k+1}}.
$$
Therefore, 
$$
\zeta':=\frac{\sum_{i=1}^rp_iu_i}{g^{k+1}}\in{\mathcal S}(N)_g\quad\text{and}\quad\widetilde{\varphi}_{\pi}(\zeta')=\zeta. 
$$
Finally, $\Uu:=\Dd_{\Specs(M)}(f)=\Spec({\mathcal S}(M)_f)$ is a neighbourhood of $\gtm$ in $\Specs(M)$ and the map 
$$
\Specs(\pi)|_{\Uu}=\Spec(\widetilde{\varphi}_{\pi}):\Uu\to\Spec({\mathcal S}(N)_g)\hookrightarrow\Specs(N)
$$
is injective.  
\end{proof}

\begin{remark}\label{FNTSS} \em It follows from the proof of the previous Theorem \ref{lciny} that given a semialgebraic map $\pi:M\to N$ such that the induced homomorphism $\varphi_{\pi}:{\mathcal S}(N)\to{\mathcal S}(M)$ is finite, there exists a subset $\{f_i:\, i\in I\}\subset{\mathcal S}(M)$ such that 
$$
\betas M=\bigcup_{i\in I}\Dd_{\betas M}(f_i)\quad\&\quad\betas\pi|_{\Dd_{\betas M}(f_i)}:\Dd_{\betas M}(f_i)\to\betas N\ \text{ is injective for each }\ i\in I.
$$
In addition we may assume that $I$ is finite because $\betas M$ is compact. Thus $\{\Dd_M(f_i):\, i\in I\}$ is a finite cover by open semialgebraic subsets of $M$ and each restriction $\pi|_{\Dd_M(f_i)}:\Dd_M(f_i)\to N$ is injective.
\end{remark}

\begin{cor}\label{chliy} Let $M\subset\R^m$ and $N\subset\R^n$ be compact semialgebraic sets. Then, a semialgebraic map $\pi:M\to N$ is locally injective if and only if the induced homomorphism $\varphi_{\pi}:{\mathcal S}(N)\to{\mathcal S}(M)$ is finite.
\end{cor}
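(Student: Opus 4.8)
The plan is to treat the two implications separately; only one of them genuinely uses compactness. The implication ``$\varphi_{\pi}$ finite $\Longrightarrow$ $\pi$ locally injective'' is already contained in Theorem \ref{lciny}, and in the sharper form recorded in Remark \ref{FNTSS}: when $\varphi_{\pi}$ is finite one obtains finitely many functions $f_i\in{\mathcal S}(M)$ with $M=\bigcup_i\Dd_M(f_i)$ and each restriction $\pi|_{\Dd_M(f_i)}$ injective (this needs no compactness at all). So the work is all in the converse.

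Assume then that $\pi$ is locally injective with $M$ compact. First I would pass to a finite cover: by compactness there are open semialgebraic sets $U_1,\dots,U_k$ covering $M$ such that each $\pi|_{U_i}$ is injective. The key preparatory step is to build a subordinate continuous semialgebraic partition of unity whose ``supports'' are \emph{compact} sets sitting inside the $U_i$. To this end I would shrink the cover: putting $d_i:=\dist(\cdot,M\setminus U_i)\in{\mathcal S}(M)$, the function $d:=\max_i d_i$ is strictly positive on the compact set $M$, so $d\geq 2\varepsilon$ for some $\varepsilon>0$; then $V_i:=\{x\in M:d_i(x)>\varepsilon\}$ is open semialgebraic, the $V_i$ still cover $M$, and $L_i:=\cl_M(V_i)$ satisfies $V_i\subset L_i\subset\{d_i\geq\varepsilon\}\subset U_i$ with $L_i$ compact (being closed in $M$). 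Normalizing the functions $\dist(\cdot,M\setminus V_i)$ — whose sum is strictly positive on $M$ since the $V_i$ cover $M$ — gives continuous semialgebraic $\theta_1,\dots,\theta_k\in{\mathcal S}(M)$ with $\theta_i\geq 0$, $\sum_i\theta_i={\bf 1}_M$ and $\{\theta_i\neq 0\}=V_i$.

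Next I would show that ${\mathcal S}(M)={\mathcal S}(N)\theta_1+\cdots+{\mathcal S}(N)\theta_k$, which is precisely the statement that $\varphi_{\pi}$ is finite. Fix $F\in{\mathcal S}(M)$. Since $L_i$ is compact and $\pi|_{L_i}$ is injective, $\pi|_{L_i}\colon L_i\to N_i:=\pi(L_i)$ is a semialgebraic homeomorphism onto a compact semialgebraic subset $N_i$ of $N$, which is therefore closed in $N$; hence $g_i:=F|_{L_i}\circ(\pi|_{L_i})^{-1}$ is a composition of semialgebraic maps and lies in ${\mathcal S}(N_i)$, and by Lemma \ref{tietze}(2) it extends to some $G_i\in{\mathcal S}(N)$. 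Then $\varphi_{\pi}(G_i)=G_i\circ\pi$ agrees with $F$ on $L_i$, and since $\{\theta_i\neq 0\}=V_i\subset L_i$ the functions $\theta_iF$ and $\theta_i\varphi_{\pi}(G_i)$ coincide on all of $M$ (they agree where $\theta_i\neq 0$, and both vanish where $\theta_i=0$). Summing over $i$,
$$
F=\sum_{i=1}^k\theta_iF=\sum_{i=1}^k\varphi_{\pi}(G_i)\,\theta_i\in{\mathcal S}(N)\theta_1+\cdots+{\mathcal S}(N)\theta_k,
$$
which finishes the converse.

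The part I expect to demand the most care is the construction of the cover $\{V_i\}$ and the functions $\theta_i$ in the second paragraph: everything hinges on arranging the closed sets $L_i=\cl_M(V_i)$ to be \emph{compact} and contained in the $U_i$, since it is the compactness of $L_i$ that upgrades the continuous injection $\pi|_{L_i}$ to a homeomorphism, thereby making $g_i$ an honest \emph{continuous} semialgebraic function on the closed set $N_i$ and hence extendable via Lemma \ref{tietze}(2). Two minor bookkeeping points also deserve attention: that $\sum_j\dist(\cdot,M\setminus V_j)$ is nowhere zero (so the $\theta_i$ are well defined), and the pointwise verification that $\theta_iF=\theta_i\varphi_{\pi}(G_i)$. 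I would remark that the finitely many $\theta_i$ furnish an explicit module generating set, so this route proves finiteness directly and does not need to go through the integrality of $\varphi_{\pi}$ supplied by Example \ref{lcinycv}(1); and that compactness of $N$ is in fact never used — only compactness of $M$, both to reduce to a finite cover and to make the $L_i$ compact.
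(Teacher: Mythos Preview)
Your proposal is correct and follows essentially the same approach as the paper's own proof. Both arguments derive the implication ``finite $\Rightarrow$ locally injective'' from Theorem~\ref{lciny}, and for the converse both (i) pass to a finite open cover on which $\pi$ is injective, (ii) build a subordinate semialgebraic partition of unity $\{\theta_i\}$ (the paper writes $g_i=f_i/f$ with $\Zz_M(f_i)=M\setminus\Bb_{x_i}$), (iii) use compactness of the closed pieces to turn $\pi$ into a homeomorphism there, push $F$ forward, and extend via Lemma~\ref{tietze}(2), and then (iv) write $F=\sum_i\varphi_\pi(G_i)\theta_i$. The only cosmetic difference is that the paper works directly with closures of small balls $\cl_M(\Bb_{x_i})$ on which $\pi$ is already injective, while you start from an arbitrary cover and insert an explicit shrinking step via distance functions; the content is the same.
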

\begin{proof} Suppose first that $\pi$ is locally injective. For each point $x\in M$ there exists an open ball $\Bb_x\subset\R^m$ centered at $x$ such that the restriction $\pi|_{\cl_M(\Bb_x)}:{\cl_M(\Bb_x)}\to N$ is injective. As $\cl_M(\Bb_x)$ is compact this implies that $\pi|_{\cl_M(\Bb_x)}$ is a semialgebraic homeomorphism onto its image $\pi(\cl_M(\Bb_x))$. Hence, the ring homomorphism 
$$
{\mathcal S}(\pi(\cl_M(\Bb_x)))\to{\mathcal S}(\cl_M(\Bb_x)),\,f\mapsto  f\circ\pi
$$
is an isomorphism. On the other hand, by Lemma \ref{tietze} (2), the restriction homomorphism ${\mathcal S}(N)\to{\mathcal S}(\pi(\cl_M(\Bb_x)))$ is surjective. Therefore, the map
$$
{\mathcal S}(N)\to{\mathcal S}(\cl_M(\Bb_x)),\, f\mapsto f|_{\pi(\cl_M(\Bb_x))}\circ\pi|_{\cl_M(\Bb_x)}
$$ 
is surjective too. Since $M$ is compact there exist finitely many points $x_1,\dots,x_r\in M$ such that $M\subset\bigcup_{i=1}^r\Bb_{x_i}$. For $1\leq i\leq r$ the difference $M\setminus\Bb_{x_i}$ is a closed semialgebraic subset of $M$ and, by Lemma \ref{tietze} (1), there exists $f_i\in{\mathcal S}(M)$ with $M\setminus\Bb_{x_i}=\Zz_M(f_i)$. In fact, changing $f_i$ by $f_i^2$ we may assume that $f_i(x)\geq0$ for every point $x\in M$. This together with the inclusion $M\subset\bigcup_{i=1}^r\Bb_{x_i}$ implies that $\Zz_M(f)=\varnothing$, where $f:=\sum_{i=1}^rf_i$. Consequently each quotient $g_i:=\frac{f_i}{f}\in{\mathcal S}(M)$ and all reduces to prove the equality
$$
{\mathcal S}(M)=g_1{\mathcal S}(N)+\cdots+g_r{\mathcal S}(N).
$$
Given $h\in{\mathcal S}(M)$ and $1\leq i\leq r$, its restriction $h|_{\cl_M(\Bb_{x_i})}\in{\mathcal S}(\cl_M(\Bb_{x_i}))$ and there exists $u_i\in{\mathcal S}(N)$ such that $u_i|_{\pi(\cl_M(\Bb_{x_i}))}\circ\pi|_{\cl_M(\Bb_{x_i})}=h|_{\cl_M(\Bb_{x_i})}$. Therefore,
$$
h\cdot f_i=f_i\cdot(u_i\circ\pi)\ \text{ for }\ 1\leq i\leq r,
$$
which implies
$$
h\cdot f=h\cdot\sum_{i=1}^rf_i=\sum_{i=1}^rh\cdot f_i=\sum_{i=1}^rf_i\cdot(u_i\circ\pi).
$$
Dividing both members by $f$, that is a unit in ${\mathcal S}(M)$, and since $\frac{f_i}{f}=g_i$ we get
$$
h=\sum_{i=1}^r\left(\frac{f_i}{f}\right)\cdot(u_i\circ\pi)=\sum_{i=1}^rg_i\cdot(u_i\circ\pi)\in g_1{\mathcal S}(N)+\cdots+g_r{\mathcal S}(N).
$$
The converse follows straightforwardly from Theorem \ref{lciny}.
\end{proof}

\begin{examples} \em (1) Let $\sph^n\subset\R^{n+1}$ be the unit sphere and let $\Proy^n(\R)$ be the $n$-dimensional real projective space. Both are compact real algebraic sets, hence semialgebraic; for the case of the projective space see \cite[3.4.4]{bcr}, where it is proved that $\Proy^n(\R)$ can be understood as a real algebraic subset of $\R^{n^2}$. As the canonical projection
$$
\pi:\sph^n\to\Proy^n(\R),\, x:=(x_0,\dots,x_n)\mapsto[x_0:\cdots:x_n]
$$
is a locally injective semialgebraic map the homomorphism $\varphi_{\pi}:{\mathcal S(\Proy^n(\R))}\to{\mathcal S(\sph^n)}$ is, by Corollary \ref{chliy}, finite. In particular it is integral and finitely generated. However it is not simple. Otherwise there would exist $h\in{\mathcal S(\sph^n)}$ such that ${\mathcal S(\sph^n)}={\mathcal S(\Proy^n(\R))}[h]$. By Borsuk-Ulam theorem, \cite[68.6]{mu} there exists a point $p\in\sph^n$ such that $h(p)=h(-p)$. Consider the semialgebraic function
$$
g:\sph^n\to\R,\, x\mapsto\|x-p\|.
$$
Then there exist a positive integer $d$ and functions $f_0,\dots,f_d\in{\mathcal S(\Proy^n(\R))}$ such that 
$$
g=(f_0\circ\pi)+(f_1\circ\pi)h+\cdots+(f_d\circ\pi)h^d.
$$
Since $\pi(-p)=\pi(p)$ and $g(-p)=2\|p\|\neq0$ whereas $g(p)=0$, we get a contradiction:
$$
0\neq g(-p)=\sum_{i=0}^d(f_i\circ\pi)(-p)h^i(-p)=\sum_{i=0}^d(f_i\circ\pi)(p)h^i(p)=g(p)=0.
$$

\noindent (2) Let ${\tt i}:=\sqrt{-1}$ and $\tau:\C\to\R^2,\, z:=x+{\tt i}y\mapsto (x,y)$. Fix a positive integer $n$ and consider the analytic map $\phi:\C\to\C, \, z\mapsto z^n$. The semialgebraic map
$$
\pi:=\tau\circ\phi\circ\tau^{-1}:M:=\R^2\to N:=\R^2
$$
is not locally injective at the origin, and it follows from Theorem \ref{lciny} that the induced homomorphism $\varphi_{\pi}:{\mathcal S}(N)\to{\mathcal S}(M)$ is not finite. However $\varphi_{\pi}$ is an integral homomorphism. To prove this, let $f\in{\mathcal S}(M)$ and consider the elementary symmetric forms in $n$ variables
$$
\sigma_j({\tt x}_1,\dots,{\tt x}_n):=(-1)^j\sum_{1\leq i_1<\cdots<i_j\leq n}{\tt x}_{i_1}\cdots{\tt x}_{i_j}\in\Z[{\tt x}_1,\dots,{\tt x}_n].
$$
For every point $z\in N\setminus\{(0,0)\}$ let $\pi^{-1}(z):=\{z_1,\dots,z_n\}\subset M$. For $1\leq j\leq n$ define the function
$$
S_j(f):N\setminus\{(0,0)\}\to\R,\, z\mapsto \sigma_j(f(z_{i_1}),\dots,f(z_{i_n})).
$$
Since $(S_j(f)\circ\tau)|_{\C\setminus\{0\}}:\C\setminus\{0\}\to\C$ is a locally bounded analytic function, $S_j(f)$ admits, by  \cite[12.3]{gpr}, a continuous extension that we also denote $S_j(f):N\to\R$. This function is semialgebraic, see the Appendix in \cite{bfg}. Hence, 
$$
{\tt P}({\tt t}):={\tt t}^n+\sum_{j=1}^n S_j(f){\tt t}^{n-j}\in{\mathcal S}(N)[\tt t]
$$ 
is a monic polynomial and ${\tt P}(f)=0$, or, more precisely, 
$$
f^n+\sum_{j=1}^n(S_j(f)\circ\pi)f^{n-j}=0,
$$
which proves that $f$ is integral over ${\mathcal S}(N)$.
\end{examples}

\begin{lem} \label{cnyrz} Let $M\subset\R^m$ be a semialgebraic set and let $f_1,\dots,f_n\in{\mathcal S}(M)$. Consider the polynomial
$$
{\tt p}({\tt t}):={\tt t}^n+\sum_{j=1}^nf_j{\tt t}^{n-j}\in{\mathcal S}(M)[{\tt t}],
$$
and suppose that for each point $x\in M$ the polynomial 
$$
{\tt p}_x({\tt t}):={\tt t}^n+\sum_{j=1}^nf_j(x){\tt t}^{n-j}\in\R[{\tt t}]
$$
splits in $\R[{\tt t}]$ as a product of \em(\em non necessarily distinct\,\em) \em factors of degree $1$. Then, there exist $g_1,\dots, g_n\in{\mathcal S}(M)$ such that
$$
{\tt p}({\tt t})=\prod_{j=1}^n({\tt t}-g_j).
$$ 
\end{lem}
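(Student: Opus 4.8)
The plan is to take $g_1,\dots,g_n$ to be the roots of the polynomial $\pt_x(\t)$ listed in non-decreasing order. Fix $x\in M$: by hypothesis $\pt_x(\t)$ factors in $\R[\t]$ as a product of $n$ degree-one polynomials, so it has exactly $n$ real roots counted with multiplicity; let $g_1(x)\le g_2(x)\le\cdots\le g_n(x)$ be the list of these roots arranged in non-decreasing order. This defines functions $g_j:M\to\R$, and by construction $\pt_x(\t)=\prod_{j=1}^n(\t-g_j(x))$ for every $x\in M$; equivalently, writing $e_j$ for the $j$-th elementary symmetric polynomial in $n$ variables, $f_j(x)=(-1)^je_j(g_1(x),\dots,g_n(x))$ for $1\le j\le n$. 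Thus the only things to check are that each $g_j$ is a semialgebraic function and that it is continuous (the latter being part of the definition of membership in ${\mathcal S}(M)$, and not automatic from semialgebraicity of the graph); once this is done, the factorization follows formally.

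First I would verify that $G:=(g_1,\dots,g_n):M\to\R^n$ is semialgebraic. Since each $f_j\in{\mathcal S}(M)$, its graph $F_j\subset M\times\R$ is a semialgebraic set; as the map $M\times\R^n\to M\times\R$, $(x,s)\mapsto(x,(-1)^je_j(s))$, is polynomial, the set $\{(x,s)\in M\times\R^n:\,(-1)^je_j(s)=f_j(x)\}$ is semialgebraic, being the preimage of $F_j$ under a semialgebraic map. Intersecting these $n$ sets with $\{(x,s):s_1\le\cdots\le s_n\}$ produces a semialgebraic subset of $M\times\R^n$ whose fiber over each $x$ consists exactly of the sorted root list of $\pt_x(\t)$: it is nonempty by the splitting hypothesis and a single point by the ordering. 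Hence this set equals the graph of $G$, so $G$, and therefore each $g_j$, has semialgebraic graph.

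The main obstacle is the continuity of $G$, i.e. the classical continuity of the roots of a monic polynomial with respect to its coefficients, restricted to the locus where all roots are real. Concretely, given $x_k\to x$ in $M$, a standard root bound (e.g. $|g_j(x_k)|\le 1+\max_i|f_i(x_k)|$) together with the continuity of the $f_i$ shows that the tuples $G(x_k)$ remain in a bounded subset of $\R^n$; from any subsequence extract a convergent sub-subsequence $G(x_{k_\ell})\to s=(s_1,\dots,s_n)$ with $s_1\le\cdots\le s_n$, and passing to the limit in $\prod_{j}(\t-g_j(x_{k_\ell}))=\pt_{x_{k_\ell}}(\t)$, using $f_i(x_{k_\ell})\to f_i(x)$, gives $\prod_{j}(\t-s_j)=\pt_x(\t)$, so $s$ is the sorted root list of $\pt_x(\t)$, that is $s=G(x)$. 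Since every subsequence has a sub-subsequence converging to $G(x)$, we conclude $G(x_k)\to G(x)$, hence $g_1,\dots,g_n\in{\mathcal S}(M)$.

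Finally, both $\pt(\t)$ and $\prod_{j=1}^n(\t-g_j)$ are monic polynomials of degree $n$ in ${\mathcal S}(M)[\t]$, and the identity $\pt_x(\t)=\prod_{j}(\t-g_j(x))$ valid for all $x\in M$ says precisely that the corresponding coefficients agree as functions on $M$; hence $\pt(\t)=\prod_{j=1}^n(\t-g_j)$ in ${\mathcal S}(M)[\t]$, as required. (As an alternative to handling all $g_j$ at once, one could argue by induction on $n$: the largest root $g_n$ is continuous by the same argument, $\pt(g_n)=0$ in ${\mathcal S}(M)$, so division by the monic factor $\t-g_n$ in ${\mathcal S}(M)[\t]$ yields $\pt(\t)=(\t-g_n)\qu(\t)$ with $\qu(\t)$ monic of degree $n-1$ and $\qu_x(\t)$ again splitting into real linear factors for every $x$, whence the inductive step applies.)
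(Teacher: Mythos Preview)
Your proof is correct and follows the same strategy as the paper's: both take $g_j(x)$ to be the $j$-th root of $\pt_x(\t)$ listed in non-decreasing order. The paper obtains semialgebraicity and continuity of the $g_j$ by composing the coefficient map $x\mapsto(f_1(x),\dots,f_n(x))$ with the functions $\zeta_j\in{\mathcal S}(\R^n)$ that return the ordered real parts of the complex roots of a generic monic polynomial (a result quoted from the Appendix of \cite{bfg}), whereas you establish these two properties directly via the graph description and a compactness argument, making your version self-contained.
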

\begin{proof} For every point $u:=(u_1,\dots,u_n)\in\R^n$ let $\zeta_1(u),\dots,\zeta_n(u)$ be the real parts of the complex zeros of the polynomial
$$
{\tt q}_u({\tt t}):={\tt t}^n+\sum_{j=1}^nu_j{\tt t}^{n-j}\in\R{[\tt t}]
$$
listing each according to its multiplicity and indexed so that $\zeta_1(u)\leq\cdots\leq\zeta_n(u)$. It was proved in the Appendix of \cite{bfg} that each $\zeta_j\in{\mathcal S}(\R^n)$. 

Now notice that, for every point $x\in M$, the roots in $\C$ of the polynomial ${\tt p}_x({\tt t})$ are the real numbers $\zeta_j(f_j(x))$. Since ${\tt p}_x$ is a monic polynomial this means that 
$$
{\tt p}_x({\tt t})=\prod_{j=1}^n({\tt t}-\zeta_j(f_j(x)))\ \text{ for every point } x\in M.
$$
Hence $g_j:=\zeta_j\circ f_j\in{\mathcal S}(M)$ for $1\leq j\leq n$ and ${\tt p}({\tt t})=\prod_{j=1}^n({\tt t}-g_j)$. 
\end{proof}

\begin{cor} \label{depent} Let $\pi:M\to N$ be a surjective semialgebraic map and let $g\in{\mathcal S}(M)$ be integral over ${\mathcal S}(N)$ via the induced homomorphism $\varphi_{\pi}$. Then, there exist an integer $k>0$ and $h_1,\dots, h_k\in{\mathcal S}(N)$ such that $\prod_{j=1}^k(g-h_j\circ\pi)=0$.
\end{cor}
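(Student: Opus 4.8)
The plan is to apply Lemma \ref{cnyrz} (to the semialgebraic set $N$) to a polynomial manufactured from the semialgebraic functions $\zeta_1,\dots,\zeta_n\in{\mathcal S}(\R^n)$ occurring in its proof: for $u:=(u_1,\dots,u_n)\in\R^n$, the values $\zeta_1(u)\leq\cdots\leq\zeta_n(u)$ are the real parts of the complex zeros of ${\tt q}_u({\tt t}):={\tt t}^n+\sum_{j=1}^nu_j{\tt t}^{n-j}$, listed according to multiplicity.

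First I would fix an integral dependence of $g$ over ${\mathcal S}(N)$, that is, $f_1,\dots,f_n\in{\mathcal S}(N)$ with
$$
g^n+\sum_{j=1}^n(f_j\circ\pi)\,g^{n-j}=0\quad\text{in }{\mathcal S}(M),
$$
and put $F:=(f_1,\dots,f_n):N\to\R^n$, which is a semialgebraic map. Since each $\zeta_j\in{\mathcal S}(\R^n)$, the elementary symmetric functions of $\zeta_1,\dots,\zeta_n$ also belong to ${\mathcal S}(\R^n)$, so composing them with $F$ produces a monic polynomial ${\tt P}({\tt t})\in{\mathcal S}(N)[{\tt t}]$ whose specialization at every $y\in N$ is ${\tt P}_y({\tt t})=\prod_{j=1}^n({\tt t}-\zeta_j(F(y)))$. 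Each ${\tt P}_y$ visibly splits in $\R[{\tt t}]$ into factors of degree $1$, so Lemma \ref{cnyrz} provides $h_1,\dots,h_n\in{\mathcal S}(N)$ with ${\tt P}({\tt t})=\prod_{j=1}^n({\tt t}-h_j)$; I take $k:=n$.

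It then remains to check $\prod_{j=1}^k(g-h_j\circ\pi)=0$ in ${\mathcal S}(M)$, which I would verify pointwise. Fix $x\in M$ and set $y:=\pi(x)$. Evaluating the integral dependence relation at $x$ shows that $g(x)$ is a real zero of ${\tt q}_{F(y)}$. The real parts of the complex zeros of ${\tt q}_{F(y)}$ are precisely $\zeta_1(F(y)),\dots,\zeta_n(F(y))$, and a real number coincides with its own real part, so $g(x)=\zeta_{j_0}(F(y))$ for some index $j_0$. Consequently
$$
\prod_{j=1}^k\big(g(x)-(h_j\circ\pi)(x)\big)={\tt P}_y(g(x))=\prod_{j=1}^n\big(g(x)-\zeta_j(F(y))\big)=0,
$$
and as $x\in M$ was arbitrary we conclude $\prod_{j=1}^k(g-h_j\circ\pi)=0$, as wanted.

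The proof is short once the functions $\zeta_j$ are available; the one step I would be careful about — essentially its entire content — is the observation that a real zero of a monic real polynomial always appears among the real parts $\zeta_j$ attached to its coefficient vector. This is exactly what allows $g(x)$ to be absorbed by one of the functions $h_j\circ\pi$, even though ${\tt q}_{F(y)}$ itself need not split over $\R$. (Note that surjectivity of $\pi$ is not actually used in this argument.)
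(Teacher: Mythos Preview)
Your proof is correct and follows essentially the same route as the paper, which also exploits the continuous root functions $\zeta_j\in{\mathcal S}(\R^n)$ together with the observation that a real zero of a monic real polynomial equals its own real part. The paper is slightly more direct: it simply sets $h_j:=\zeta_j\circ\phi$ (your $\zeta_j\circ F$) and stops, whereas your detour through constructing ${\tt P}$ and invoking Lemma~\ref{cnyrz} is redundant, since ${\tt P}({\tt t})=\prod_{j=1}^n({\tt t}-\zeta_j\circ F)$ is already a factorization in ${\mathcal S}(N)[{\tt t}]$ by construction.
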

\begin{proof} Since $g$ is integral over ${\mathcal S}(N)$ there exist an integer $k>0$ and $f_1,\dots,f_k\in{\mathcal S}(N)$ such that
$$
g^k+(f_1\circ\pi)g^{k-1}+\cdots+(f_{k-1}\circ\pi)g+(f_k\circ\pi)=0.
$$
Consider the semialgebraic map 
$$
\phi:N\to\R^n,\, y\mapsto (f_1(y),\dots, f_k(y)).
$$ 
Then, with the notations in Lemma \ref{cnyrz}, $g(x)\in\{\zeta_j(\phi(\pi(x))):\, 1\leq j\leq k\}$ for each point $x\in M$ or, equivalently, the functions $h_j:=\zeta_j\circ\phi\in{\mathcal S}(N)$ satisfy the equality in the statement.
\end{proof}

\begin{example} \em (1) Let $\pi:M\to N$ be a surjective semialgebraic map and let 
$g\in{\mathcal S}(M)$ be integral over ${\mathcal S}(N)$ via $\varphi_{\pi}$. Let  $\phi:\R\to\R$ be a semialgebraic function. Then $\phi\circ g$ is integral over ${\mathcal S}(N)$ via $\varphi_{\pi}$. 

Indeed, by the previous Corollary \ref{depent}, there exist an integer $k>0$ and semialgebraic functions $h_1,\dots, h_k\in{\mathcal S}(N)$ such that $\prod_{j=1}^k(g-h_j\circ\pi)=0$. Thus, for every point $x\in M$ there exists an index $j$ with $1\leq j\leq k$ such that $g(x)=h_j(\pi(x))$ and  
$$
(\phi\circ g)(x)=\phi(g(x))=\phi(h_j(\pi(x))).
$$
This means that $\prod_{j=1}^k(\phi\circ g-\phi\circ h_j\circ\pi)=0$ and $\phi\circ g$ is integral over ${\mathcal S}(N)$ via $\varphi_{\pi}$ because each $\phi\circ h_j\in{\mathcal S}(N)$.

\noindent (2) Choosing $\phi:\R\to\R,\, t\mapsto|t|$ it follows that $|g|$ is integral over ${\mathcal S}(N)$ via $\varphi_{\pi}$ for every function $g\in{\mathcal S}(M)$ that is integral over ${\mathcal S}(N)$ via $\varphi_{\pi}$.
\end{example}

\begin{defn} \em Let $M\subset\R^m$ be a semialgebraic set. A subset ${\mathcal F}\subset{\mathcal S}(M)$ \em separates points \em of $M$ if for every pair of distinct points $p,q\in M$ there exists a function $f\in{\mathcal S}(M)$ such that $f(p)\neq f(q)$.
\end{defn}

\begin{prop} \label{sep} Let $\pi:M\to N$ be a closed and surjective semialgebraic map between semialgebraic sets $M$ and $N$. Suppose that there exist $f_1,\dots,f_k\in{\mathcal S}(M)$ such that each $f_i$ is integral over ${\mathcal S}(N)$ via $\varphi_{\pi}$ and ${\mathcal S}(N)[f_1,\dots,f_k]$ separates points of $M$. Then, there exists a finite cover $\{M_1,\dots,M_r\}$ by closed semialgebraic subsets of $M$ such that each restriction $\pi|_{M_i}: M_i\to N$ is injective. In particular, $\varphi_{\pi}$ is an integral homomorphism.
\end{prop}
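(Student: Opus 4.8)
The plan is to decompose $M$ into finitely many closed semialgebraic pieces, on each of which the value of $\pi$ already pins down the values of all the generators $f_1,\dots,f_k$, so that injectivity follows from the separation hypothesis. First I would apply Corollary \ref{depent} to each of the functions $f_1,\dots,f_k$: since $\pi$ is surjective and $f_i$ is integral over ${\mathcal S}(N)$ via $\varphi_\pi$, there exist an integer $k_i>0$ and functions $h_{i,1},\dots,h_{i,k_i}\in{\mathcal S}(N)$ with $\prod_{j=1}^{k_i}(f_i-h_{i,j}\circ\pi)=0$ on $M$. In particular, for each $x\in M$ and each $i$ there is an index $j_i=j_i(x)\in\{1,\dots,k_i\}$ with $f_i(x)=h_{i,j_i}(\pi(x))$.

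Next, for every multi-index $\mathbf{j}=(j_1,\dots,j_k)$ in the finite set $\prod_{i=1}^k\{1,\dots,k_i\}$ I would set
$$
M_{\mathbf{j}}:=\bigcap_{i=1}^k\Zz_M\big(f_i-h_{i,j_i}\circ\pi\big),
$$
which is a closed semialgebraic subset of $M$ because each $f_i-h_{i,j_i}\circ\pi\in{\mathcal S}(M)$. By the previous paragraph the finitely many sets $M_{\mathbf{j}}$ cover $M$; discarding those that are empty and relabelling produces the finite closed cover $\{M_1,\dots,M_r\}$ in the statement.

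The crucial point — and the only place where the full strength of the hypotheses enters — is to verify that each $\pi|_{M_{\mathbf{j}}}$ is injective. Suppose $x,y\in M_{\mathbf{j}}$ satisfy $\pi(x)=\pi(y)$. Then for each $i$ we get $f_i(x)=h_{i,j_i}(\pi(x))=h_{i,j_i}(\pi(y))=f_i(y)$. Since every element of ${\mathcal S}(N)[f_1,\dots,f_k]$ is an expression of the form $\sum_\alpha(g_\alpha\circ\pi)f_1^{\alpha_1}\cdots f_k^{\alpha_k}$ with $g_\alpha\in{\mathcal S}(N)$, and $g_\alpha\circ\pi$ takes the same value at $x$ and $y$ while each $f_i$ does too, every function in ${\mathcal S}(N)[f_1,\dots,f_k]$ takes equal values at $x$ and $y$. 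As this family separates points of $M$, we conclude $x=y$.

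Finally, having exhibited a finite cover of $M$ by closed semialgebraic subsets on each of which $\pi$ restricts to an injective map, and recalling that $\pi$ is a closed semialgebraic map, the integrality of $\varphi_\pi$ follows immediately from Examples \ref{lcinycv}(1). I expect no serious obstacle here: the argument is essentially bookkeeping, the one genuine idea being to read Corollary \ref{depent} as a finite list of "branch choices" and to assemble these choices into a single finite closed stratification; once the strata $M_{\mathbf{j}}$ are in place, injectivity on each of them is a one-line consequence of the separation assumption.
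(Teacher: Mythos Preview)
Your proof is correct and follows essentially the same route as the paper: apply Corollary \ref{depent} to each $f_i$, form the finite closed cover $M_{\mathbf j}=\bigcap_i\Zz_M(f_i-h_{i,j_i}\circ\pi)$ indexed by multi-indices, and check injectivity on each piece using the separation hypothesis, then invoke Example \ref{lcinycv}(1) for integrality. The only cosmetic difference is that the paper phrases the injectivity step as a proof by contradiction, whereas you argue it directly.
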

\begin{proof} Notice that the last part is the immediate consequence of the first one and Example \ref{lcinycv} (1). To prove the first one notice that, by the previous Corollary \ref{depent}, for $1\leq i\leq k$ there exist $g_{i1},\dots, g_{i\ell_{i}}\in{\mathcal S}(N)$ such that 
$$
\prod_{j=1}^{\ell_i}(f_i-g_{ij}\circ\pi)=0.
$$
Denote $h_{ij}:=f_i-g_{ij}\circ\pi\in{\mathcal S}(M)$ and $Z_{ij}:=\Zz_M(h_{ij})$ for $1\leq i\leq k$ and $1\leq j\leq\ell_i$. Each $Z_{ij}$ is a closed semialgebraic subset of $M$ and, for fixed $i$, we have $M=\bigcup_{j=1}^{\ell_i}Z_{ij}$. 

Let $\Sigma$ be the set of all $k$-tuples $J:=(j_1,\dots,j_k)$ with $1\leq j_i\leq\ell_i$ and let
$$
h_J:=\sum_{i=1}^kh_{ij_i}^2\in{\mathcal S}(M)\quad\&\quad M_J:=\Zz_M(h_J)=\bigcap_{i=1}^{k}\Zz_{M}(h_{ij_i}).
$$
Observe that for every point $x\in M$ and each index $i$ with $1\leq i\leq k$ there exists an index $j_i$ with $1\leq j_i\leq \ell_i$ such that $x\in Z_{i{j_i}}$. Thus $x\in M_J$, where  $J:=(j_1,\dots,j_k)$. In other words, the family $\{M_J:\, J\in\Sigma\}$ is a finite cover by closed semialgebraic subsets of $M$, and all reduces to check that each restriction $\pi|_{M_J}:M_J\to N$ is injective. Suppose, by the way of contradiction, that there exist two distinct points $p,q\in M_J$ with $\pi(p)=\pi(q)$. Since ${\mathcal S}(N)[f_1,\dots,f_k]$ separates points there exists a polynomial 
$$
{\tt p}({\tt x}_1,\dots,{\tt x}_k):=\sum u_{i_1,\dots,i_k}{\tt x}_{i_1}^{\nu_{i_1}}\cdots{\tt x}_{i_k}^{\nu_{i_k}}\in{\mathcal S}(N)[{\tt x}_1,\dots,{\tt x}_k]
$$
such that the function 
$$
f:={\tt p}(f_1,\dots,f_k)=\sum(u_{i_1,\dots,i_k}\circ\pi)f_{i_1}^{\nu_{i_1}}\cdots f_{i_k}^{\nu_{i_k}}\in{\mathcal S}(M)
$$
satisfies $f(p)\neq f(q)$. But for each multiindex $(i_1,\dots,i_k)$ one has
$$
(u_{i_1,\dots,i_k}\circ\pi)(p)=u_{i_1,\dots,i_k}(\pi(p))=u_{i_1,\dots,i_k}(\pi(q))=(u_{i_1,\dots,i_k}\circ\pi)(q),
$$
so there exists some index $i$ with $1\leq i\leq k$ such that $f_i(p)\neq f_i(q)$. Since $p,q\in M_J$ we know that $h_{ij_i}(p)=h_{ij_i}(q)=0$, that is, 
$$
f_i(p)=g_{ij_i}(\pi(p))=g_{ij_i}(\pi(q))=f_i(q),
$$
a contradiction.
\end{proof}

\begin{cor} Let $\pi:M\to N$ be a closed and surjective semialgebraic map between semialgebraic sets $M$ and $N$ and suppose that there exists $f\in{\mathcal S}(M)$ that is integral over ${\mathcal S}(M)$ via $\varphi_{\pi}$ and it is injective on each fiber of $\pi$. Then, there exists a finite cover $\{M_1,\dots,M_r\}$ by closed semialgebraic subsets of $M$ such that each map $\pi|_{M_i}: M_i\to N$ is injective. In particular, $\varphi_{\pi}$ is an integral homomorphism.
\end{cor}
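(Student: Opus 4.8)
The plan is to obtain this corollary as the special case $k=1$ of Proposition \ref{sep}, taking $f_1:=f$. Indeed, Proposition \ref{sep} already produces, from a finite family of functions of ${\mathcal S}(M)$ integral over ${\mathcal S}(N)$ via $\varphi_\pi$ which together with ${\mathcal S}(N)$ generate a subalgebra separating the points of $M$, both the finite cover $\{M_1,\dots,M_r\}$ by closed semialgebraic subsets with each $\pi|_{M_i}:M_i\to N$ injective and the integrality of $\varphi_\pi$. So the only point that requires an argument is that ${\mathcal S}(N)[f]$ separates points of $M$; everything else is quoted verbatim from Proposition \ref{sep}. (Here the hypothesis ``integral over ${\mathcal S}(M)$'' in the statement should read ``integral over ${\mathcal S}(N)$''; this is a harmless misprint.)

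To check the separation property, I would first recall that ${\mathcal S}(N)$ separates points of $N$: writing $N\subset\R^n$, the restrictions $\x_1|_N,\dots,\x_n|_N$ of the coordinate functions lie in ${\mathcal S}(N)$ and obviously distinguish any two distinct points of $N$ (alternatively, the distance functions $\dist(\cdot,\{q\})$ would do). Now take two distinct points $p,q\in M$ and distinguish two cases. If $\pi(p)\neq\pi(q)$, choose $g\in{\mathcal S}(N)$ with $g(\pi(p))\neq g(\pi(q))$; then $\varphi_\pi(g)=g\circ\pi\in{\mathcal S}(N)\subset{\mathcal S}(N)[f]$ separates $p$ and $q$. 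If $\pi(p)=\pi(q)$, then $p$ and $q$ lie in the common fiber $\pi^{-1}(\pi(p))$, on which $f$ is injective by hypothesis, so $f(p)\neq f(q)$ and $f\in{\mathcal S}(N)[f]$ separates $p$ and $q$. In either case ${\mathcal S}(N)[f]$ separates $p$ from $q$, as needed.

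With this verified, I would simply invoke Proposition \ref{sep} with $k=1$ and $f_1=f$ to obtain the finite closed cover and the integrality of $\varphi_\pi$, completing the proof. I do not anticipate any genuine obstacle: the argument is a direct reduction to the previous proposition, and the only step deserving a line of justification is the elementary fact that ${\mathcal S}(N)$ separates the points of the ambient semialgebraic set $N$.
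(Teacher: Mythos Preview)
Your proof is correct and follows essentially the same approach as the paper: reduce to Proposition \ref{sep} with $k=1$, and verify that ${\mathcal S}(N)[f]$ separates points of $M$ by the same two-case argument (the paper uses the distance function $\dist(\cdot,\pi(p))$ for the case $\pi(p)\neq\pi(q)$, which you also mention as an alternative). Your observation that ``integral over ${\mathcal S}(M)$'' should read ``integral over ${\mathcal S}(N)$'' is correct.
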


\begin{proof} By using Proposition \ref{sep} it is enough to check that ${\mathcal S}(M)[f]$ separates points in $M$. Indeed, let $p,q\in M$ be two distinct points. If $\pi(p)\neq\pi(q)$ the semialgebraic function
$$
g:=\dist(.,\pi(p)):N\to\R,\, y\mapsto\dist(y,\pi(p))
$$
vanishes at $\pi(p)$ and $g(\pi(q))\neq0$. Thus, $(g\circ\pi)(p)\neq(g\circ\pi)(q)$. On the other hand, if $\pi(p)=\pi(q):=y$ then $f(p)\neq f(q)$ because $f|_{\pi^{-1}(y)}$ is injective.
\end{proof}

\begin{cor} Let $\pi:M\to N$ be a closed and surjective semialgebraic map between semialgebraic sets $M$ and $N$. Then, the induced homomorphism $\varphi_{\pi}:{\mathcal S}(N)\to{\mathcal S}(M)$ is integral if and only if there exists a finite cover $\{M_i:\, 1\leq i\leq k\}$ by closed semialgebraic subsets of $M$ such that each restriction $\pi|_{M_i}:M_i\to N$ is injective.
\end{cor}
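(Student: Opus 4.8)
The \emph{if} direction needs no new work: it is exactly Example \ref{lcinycv}~(1), which says that the existence of a finite cover $\{M_i:\,1\le i\le k\}$ of $M$ by closed semialgebraic subsets with every restriction $\pi|_{M_i}$ injective forces $\varphi_{\pi}$ to be integral. So I would concentrate on the \emph{only if} direction, and the plan is to deduce it from Proposition \ref{sep}.

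Assume $\varphi_{\pi}$ is integral and write $M\subset\R^m$. I would take $f_i:=\x_i|_M\in{\mathcal S}(M)$ for $1\le i\le m$, the restrictions of the coordinate projections of $\R^m$. Since $\varphi_{\pi}$ is an integral homomorphism, each $f_i$ is automatically integral over ${\mathcal S}(N)$ via $\varphi_{\pi}$. The subring ${\mathcal S}(N)[f_1,\dots,f_m]$ of ${\mathcal S}(M)$ contains $f_1,\dots,f_m$, and this already suffices to separate the points of $M$: two distinct points $p,q\in M\subset\R^m$ differ in some coordinate, so $f_i(p)\ne f_i(q)$ for some $i$. Thus the hypotheses of Proposition \ref{sep} are met --- closedness and surjectivity of $\pi$ being part of the corollary's hypotheses --- and that proposition produces a finite cover $\{M_1,\dots,M_r\}$ of $M$ by closed semialgebraic subsets with each $\pi|_{M_j}:M_j\to N$ injective, which is exactly the conclusion sought.

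I do not expect a genuine obstacle here: the corollary is essentially a repackaging of Proposition \ref{sep} together with Example \ref{lcinycv}~(1). The only point to be careful about is exhibiting a concrete separating family of ${\mathcal S}(N)$-integral elements, and the ambient coordinate functions $\x_1|_M,\dots,\x_m|_M$ do the job precisely because integrality of the homomorphism $\varphi_{\pi}$ is inherited by \emph{every} element of ${\mathcal S}(M)$, so no verification of integrality beyond invoking the hypothesis is required.
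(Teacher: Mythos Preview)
Your proof is correct and follows the paper's argument essentially verbatim: the ``if'' direction is Example~\ref{lcinycv}~(1), and for the ``only if'' direction the paper likewise chooses the ambient coordinate functions $f_i:=\x_i|_M$ as a point-separating family of (automatically) integral elements and applies Proposition~\ref{sep}.
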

\begin{proof} The ``if part'' was proved in Example  \ref{lcinycv} (1). For the ``only if'' part suppose that $M\subset\R^k$ and choose
$$
f_i:M\to\R,\, x:=(x_1,\dots,x_m)\mapsto x_i,\quad\text{for } i=1,\dots,k.
$$
As $f_1,\dots,f_k\in{\mathcal S}(M)$ and ${\mathcal S}(N)[f_1,\dots,f_k]$ separates points of $M$, the conclusion follows from Proposition \ref{sep}.
\end{proof}

\begin{thm} Let $\pi:M\to N$ be a surjective semialgebraic map between the compact semialgebraic sets $M$ and $N$ such that the induced homomorphism  $\varphi_{\pi}:{\mathcal S}(N)\to{\mathcal S}(M)$ is simple. Then $\pi$ is locally injective.
\end{thm}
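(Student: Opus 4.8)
The plan is to turn the hypothesis $\mathcal{S}(M)=\mathcal{S}(N)[f]$, where $f\in\mathcal{S}(M)$, into a uniform Lipschitz-type bound along the fibres of $\pi$, and then to contradict that bound at a hypothetical point of non-local-injectivity by evaluating it on a square-root function. \emph{Two consequences of simplicity.} Writing an arbitrary $g\in\mathcal{S}(M)$ as $g=\sum_{j=0}^{d}(c_j\circ\pi)f^j$ with $c_j\in\mathcal{S}(N)$, I would first observe that $f$ is injective on each fibre of $\pi$: if $\pi(p)=\pi(q)$ and $f(p)=f(q)$, then $g(p)=g(q)$ for every $g\in\mathcal{S}(M)$, and since the restrictions to $M\subset\R^m$ of the coordinate projections separate points of $M$, it follows that $p=q$. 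Second, factoring $f(p)^j-f(q)^j=(f(p)-f(q))\sum_{l=0}^{j-1}f(p)^lf(q)^{j-1-l}$ and using that $|f|$ is bounded on the compact set $M$ and that each $|c_j|$ is bounded on the compact set $\pi(M)$, I would obtain for every $g\in\mathcal{S}(M)$ a constant $C_g>0$ with $|g(p)-g(q)|\le C_g\,|f(p)-f(q)|$ whenever $\pi(p)=\pi(q)$.

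\emph{Curve selection.} Arguing by contradiction, suppose $\pi$ is not locally injective; then there is a point $x_0\in M$ such that $\pi$ is injective on no open neighbourhood of $x_0$ in $M$. Put $a_0:=f(x_0)$ and consider the semialgebraic set $D:=\{(p,q)\in M\times M:\ \pi(p)=\pi(q),\ f(p)<f(q)\}$. Since $f$ is injective on fibres, the failure of local injectivity at $x_0$ forces $(x_0,x_0)\in\cl_{M\times M}(D)$, so the semialgebraic curve selection lemma provides a continuous semialgebraic curve $s\mapsto(p(s),q(s))$ defined on some $[0,\varepsilon)$ with $(p(0),q(0))=(x_0,x_0)$ and $(p(s),q(s))\in D$ for $0<s<\varepsilon$. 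Set $A(s):=f(p(s))-a_0$ and $B(s):=f(q(s))-a_0$; these are semialgebraic functions of $s$ with $A(s),B(s)\to0$ as $s\to0^+$ and $A(s)<B(s)$ on $(0,\varepsilon)$.

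\emph{The contradiction.} I would test the bound above on $g_1:=\sqrt{|f-a_0{\bf 1}_M|}\in\mathcal{S}(M)$ and on $g_2:=\psi\circ(f-a_0{\bf 1}_M)\in\mathcal{S}(M)$, where $\psi\in\mathcal{S}(\R)$ is the odd square root $\psi(u):=\sign(u)\sqrt{|u|}$ (its graph is the semialgebraic set $\{v^2=|u|,\ uv\ge0\}$ and it is continuous). Since the one-variable semialgebraic functions $A$ and $B$ have eventually constant sign, after shrinking $\varepsilon$ we may assume that either $A(s)B(s)\ge0$ throughout $(0,\varepsilon)$ or $A(s)B(s)\le0$ throughout $(0,\varepsilon)$. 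In the first case a direct computation gives $|g_1(q(s))-g_1(p(s))|=(B(s)-A(s))/(\sqrt{|A(s)|}+\sqrt{|B(s)|})$, whereas $f(q(s))-f(p(s))=B(s)-A(s)$. In the second case $A(s)\le0\le B(s)$, so $|g_2(q(s))-g_2(p(s))|=\sqrt{|A(s)|}+\sqrt{|B(s)|}$, whereas $f(q(s))-f(p(s))=B(s)-A(s)=|A(s)|+|B(s)|$. In either case, since $A(s),B(s)\to0$ and $A(s)<B(s)$ (so the denominators are positive and tend to $0$), the quotient $|g_i(q(s))-g_i(p(s))|/(f(q(s))-f(p(s)))$ tends to $+\infty$ as $s\to0^+$, which is incompatible with the inequality $|g_i(p)-g_i(q)|\le C_{g_i}|f(p)-f(q)|$ applied on the fibre over $\pi(p(s))$. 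Hence $\pi$ is locally injective.

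The step I expect to require the most care is the last one: one must organise the sign analysis of $A$ and $B$ so that $g_1$ handles the case $A(s)B(s)\ge0$ and $g_2$ the case $A(s)B(s)\le0$ (including the degenerate subcases where $A$ or $B$ vanishes identically near $0$), and one must check that $g_1,g_2$ genuinely belong to $\mathcal{S}(M)$ and that the constants $C_{g_i}$ are fibrewise uniform — all of which rests on the compactness of $M$ and of $\pi(M)$. Note that this argument uses neither the compactness of $N$ nor the surjectivity of $\pi$, and in fact shows more than simplicity requires; compare Corollary \ref{chliy}, whose ``only if'' direction it parallels.
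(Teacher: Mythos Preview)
Your argument is correct and takes a genuinely different route from the paper. The paper does not attack local injectivity directly: it first shows that the generator $g$ of $\mathcal S(M)$ over $\mathcal S(N)$ is integral (by writing $1/g$ as a polynomial in $g$, flipping this into an integral equation for $1/g$, invoking Corollary~\ref{depent} to factor it, and then inverting back), concludes that $\varphi_\pi$ is finite, and finally appeals to Corollary~\ref{chliy} (hence ultimately to the spectral machinery of Theorem~\ref{lciny}). Your approach bypasses all of this: the fibrewise Lipschitz inequality $|g(p)-g(q)|\le C_g\,|f(p)-f(q)|$ is an immediate consequence of a single polynomial representation of $g$ together with the compactness of $M$, and the square-root test functions $g_1,g_2$ violate it along a curve selected at a point of non-local-injectivity. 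What your method buys is a self-contained, elementary proof that avoids localisation, Nakayama, and the Stone--\v Cech compactification, and (as you note) dispenses with both the surjectivity of $\pi$ and the compactness of $N$; what the paper's method buys is the stronger intermediate conclusion that $\varphi_\pi$ is actually finite, not merely that $\pi$ is locally injective (of course, once you have local injectivity you can recover finiteness from the ``only if'' half of Corollary~\ref{chliy}). Your caution about the sign analysis is warranted but you have handled it correctly: the inequality $(\sqrt{|A|}+\sqrt{|B|})/(|A|+|B|)\ge 1/(\sqrt{|A|}+\sqrt{|B|})$ disposes of Case~2, and the degenerate subcases $A\equiv0$ or $B\equiv0$ fall under Case~1 where the displayed formula for $|g_1(q)-g_1(p)|$ still applies.
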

\begin{proof} By Corollary \ref{chliy} it is enough to prove that $\varphi_{\pi}$ is finite. By the hypothesis there exists a function $g\in{\mathcal S}(M)$ such that ${\mathcal S}(M)={\mathcal S}(N)[g]$ and, using \cite[5.1]{am}, it suffices to see that $g$ is integral over ${\mathcal S}(N)$. We may assume that $g(x)>0$ for every $x\in M$. Indeed, since $M$ is compact there exists $r:=\min\,\{g(x):\, x\in M\}$, and the function $g-r+1$ is positive on $M$ and ${\mathcal S}(N)[g]={\mathcal S}(N)[g-r+1]$. In addition, using again the compactness of $M$, there exists $\veps>0$ such that $g(x)<\veps$ for each $x\in M$. Since $\Zz_M(g)=\varnothing$ also the quotient $\frac{1}{g}\in{\mathcal S}(M)={\mathcal S}(N)[g]$, and there exist an integer $n\geq0$ and $f_0,\dots,f_n\in{\mathcal S}(N)$ with
$$
\frac{1}{g}=(f_0\circ\pi)+(f_1\circ\pi)g+(f_2\circ\pi)g^2+\cdots+(f_{n-1}\circ\pi)g^{n-1}+(f_n\circ\pi)g^n.
$$
Dividing both members by $g^n$ we get
$$
\frac{1}{g^{n+1}}=(f_0\circ\pi)\left(\frac{1}{g}\right)^n+(f_1\circ\pi)\left(\frac{1}{g}\right)^{n-1}+(f_2\circ\pi)\left(\frac{1}{g}\right)^{n-2}+\cdots+(f_{n-1}\circ\pi)\left(\frac{1}{g}\right)+(f_n\circ\pi).
$$
Therefore $\frac{1}{g}$ is integral over ${\mathcal S}(N)$ and, by Corollary \ref{depent}, there exist $h_0,\dots, h_n\in{\mathcal S}(N)$ satisfying 
$$
\prod_{j=0}^n\Big(\frac{1}{g}-h_j\circ\pi\Big)=0.
$$
Hence, for each $x\in M$ there exists $j$ with $0\leq j\leq n$ such that $h_j(\pi(x))=\frac{1}{g(x)}$. The function $\ell_j:=\max\,\{h_j,\frac{1}{\veps}\}\in{\mathcal S}(N)$ has empty zeroset and $\frac{1}{h(\pi(x))}=g(x)<\veps$. Thus $h_j(\pi(x))>\frac{1}{\veps}$, that is, $\ell_j(\pi(x))=\frac{1}{g(x)}$ or, equivalently, $g(x)=\frac{1}{\ell_j(x)}$. Consequently, $g$ is integral over ${\mathcal S}(N)$, because
$$
\prod_{j=0}^n\Big(g-\frac{1}{\ell_j\circ\pi}\Big)=0.
$$
\end{proof}

Data availability statement: arXiv. submit/6277125 [math.AG] 13 Mar.2025

\bibliographystyle{amsalpha}

\end{document}